\DeclareMathOperator{\End}{End} 
\DeclareMathOperator{\Ext}{Ext}
\DeclareMathOperator{\Hom}{Hom} 
\DeclareMathOperator{\rad}{rad}
\DeclareMathOperator{\rk}{rk} 
\DeclareMathOperator{\sh}{sh}
\DeclareMathOperator{\rep}{rep}
\DeclareMathOperator{\GL}{GL}
\DeclareMathOperator{\Ad}{Ad}
\definecolor{lightgreen}{rgb}{0.8,1,0.8}
\definecolor{dlightgreen}{rgb}{0.6,1,0.6}
\definecolor{lightblue}{rgb}{0.64,0.83,0.93}
\definecolor{dlightblue}{rgb}{0.39,0.58,0.93}
\definecolor{lightviolet}{rgb}{0.85,0.44,0.84}
\definecolor{dlightviolet}{rgb}{0.7,0.23,0.94}
\definecolor{grey}{rgb}{0.8,0.8,0.8}
\definecolor{lblue}{rgb}{0.3,0.0,4.4}
\definecolor{lred}{rgb}{4.3,0.0,0.4}
 \definecolor{lightblue}{rgb}{0.8,0.8,0.9}
  \definecolor{lightred}{rgb}{0.9,0.8,0.8}
\newcommand*{\punkte}{\dots\unkern}
\newcolumntype{C}[1]{>{\centering\arraybackslash}p{#1}}
\newcommand{\A}{\mathcal{A}} 
\newcommand{\B}{\mathcal{B}} 
\newcommand{\Pa}{\mathcal{P}}
\newcommand{\Q}{\mathcal{Q}} 
\newcommand{\Orb}{\mathcal{O}} 
\newcommand{\N}{\mathcal{N}}
\newcommand{\dimv}{\underline{\dim}}
\newcommand{\df}{\underline{d}}
\newcommand{\CC}{\mathcal{C}}
\newtheorem{theorem}{Theorem}[section]
\newtheorem{lemma}[theorem]{Lemma}
\newtheorem{definition}[theorem]{Definition}
\newtheorem{proposition}[theorem]{Proposition}
\newtheorem{corollary}[theorem]{Corollary}
\newtheorem{example}[theorem]{Example}
\newtheorem*{1maintheorem}{Main Theorem}
\newtheorem*{classtheorem}{Classification Theorem}
\newtheorem*{theorem*}{Theorem}
\newcounter{x}
\newcounter{y}
\newcounter{z}
\newcommand*\cubecolors[1]{%
  \ifcase#1\relax
  \or\colorlet{cubecolor}{cyan}%
  \or\colorlet{cubecolor}{blue}%
  \or\colorlet{cubecolor}{purple}%
  \or\colorlet{cubecolor}{red}%
  \or\colorlet{cubecolor}{purple}%
  \or\colorlet{cubecolor}{blue}%
  \else
    \colorlet{cubecolor}{white}%
  \fi
}
\newcommand\yaxis{180}
\newcommand\zaxis{-27}
\newcommand\xaxis{90}
\newcommand\topside[3]{
  \fill[fill=cubecolor, draw=black,shift={(\xaxis:#1)},shift={(\yaxis:#2)},
  shift={(\zaxis:#3)}] (0,0) -- (1,0) -- (0.5,0.25) --(-0.5,0.25)--(0,0);
}
\newcommand\leftside[3]{
  \fill[fill=cubecolor, draw=black,shift={(\xaxis:#1)},shift={(\yaxis:#2)},
  shift={(\zaxis:#3)}] (0,0) -- (0,-1) -- (-0.5,-0.75) --(-0.5,0.25)--(0,0);
}
\newcommand\rightside[3]{
  \fill[fill=cubecolor, draw=black,shift={(\xaxis:#1)},shift={(\yaxis:#2)},
  shift={(\zaxis:#3)}] (0,0) -- (1,0) -- (1,-1) --(0,-1)--(0,0);
}
\newcommand\cube[3]{
  \topside{#1}{#2}{#3} \leftside{#1}{#2}{#3} \rightside{#1}{#2}{#3}
}
\newcommand\planepartition[2]{
 \setcounter{x}{0}
 \foreach \a in {#2} {
    \addtocounter{x}{1}
    \setcounter{y}{-1}
    \cubecolors{\value{x}}
    \foreach \b in \a {
      \addtocounter{y}{1}
      \setcounter{z}{-1}
      \foreach \c in {0,...,\b} {
        \addtocounter{z}{1}
      \ifthenelse{\c=0}{\setcounter{z}{-1},\addtocounter{y}{0}}{
        \FPeval{\newz}{clip(0.55*\the\value{z})}
        \cube{\value{x}}{\value{y}}{\newz};
        \FPeval{\result}{clip(#1-\the\value{y}+2*\the\value{z})}
        \draw[draw=black,shift={(\xaxis:\value{x})},shift={(\yaxis:\value{y})},
  shift={(\zaxis:\newz)}] (0.5,-0.5) node {};}
      }
    }
  }
}
\newcommand\planepartitionD[2]{
 \setcounter{x}{0}
 \foreach \a in {#2} {
    \addtocounter{x}{1}
    \setcounter{y}{-1}
    \cubecolors{\value{x}}
    \foreach \b in \a {
      \addtocounter{y}{1}
      \setcounter{z}{-1}
      \foreach \c in {0,...,\b} {
        \addtocounter{z}{1}
      \ifthenelse{\c=0}{\setcounter{z}{-1},\addtocounter{y}{0}}{
        \FPeval{\newz}{clip(0.55*\the\value{z})}
        \cube{\value{x}}{\value{y}}{\newz};
        \FPeval{\result}{clip(#1-\the\value{y}-2*\the\value{z})}
        \draw[draw=black,shift={(\xaxis:\value{x})},shift={(\yaxis:\value{y})},
  shift={(\zaxis:\newz)}] (0.5,-0.5) node {};}
      }
    }
  }
}
\newcommand\planepartitionE[2]{
 \setcounter{x}{0}
 \foreach \a in {#2} {
    \addtocounter{x}{1}
    \setcounter{y}{-1}
    \cubecolors{\value{x}}
    \foreach \b in \a {
      \addtocounter{y}{1}
      \setcounter{z}{-1}
      \foreach \c in {0,...,\b} {
        \addtocounter{z}{1}
      \ifthenelse{\c=0}{\setcounter{z}{-1},\addtocounter{y}{0}}{
        \FPeval{\newz}{clip(0.55*\the\value{z})}
        \cube{\value{x}}{\value{y}}{\newz};
        \FPeval{\result}{clip(#1-\the\value{y}-2*\the\value{z})}
        \draw[draw=black,shift={(\xaxis:\value{x})},shift={(\yaxis:\value{y})},
  shift={(\zaxis:\newz)}] (0.5,-0.5) node {\result};}
      }
    }
  }
}
\long\def\nnfoottext#1{\insert\footins{\footnotesize
    \interlinepenalty\interfootnotelinepenalty
    \splittopskip\footnotesep
    \splitmaxdepth \dp\strutbox \floatingpenalty \@MM
    \hsize\columnwidth \@parboxrestore
   \edef\@thefnmark{}
   \edef\@currentlabel{}\@makefntext
    {\rule{\z@}{\footnotesep}\ignorespaces
      #1\strut}}}
\begin{document}
\parindent0pt
\title{\bf Multi-graded nilpotent tuples}
\nnfoottext{Keywords: quiver with relation, graded nilpotent pairs, multi-grading, representation type, tame, wild, finiteness criterion

AMS Classification 2010: 16G20, 16G60, 17B08}

\author{Magdalena Boos\\Ruhr University Bochum\\ Faculty of Mathematics\\  D - 44780 Bochum, Germany.\\  Magdalena.Boos-math@ruhr-uni-bochum.de }
\date{}
\maketitle

\begin{abstract}
We discuss multi-graded nilpotent tuples of multi-graded vector spaces which are a generalization of graded nilpotent pairs \cite{B4}. The multi-grading yields a natural notion of a shape of such tuple and our main interest is to answer the question \textquotedblleft Is the number of multi-graded nilpotent tuples of a fixed shape, up to base change in the homogeneous components, finite?\textquotedblright~ Our methods make use of a translation to the class of  so-called \textquotedblleft Multi-staircase algebras\textquotedblright~ and we classify their representation types. 
\end{abstract}

\section{Introduction}\label{sect:intro}
Given a complex semi-simple Lie algebra $\mathfrak{g}$, an element $x\in\mathfrak{g}$ is called \textit{regular} if its centralizer  in $\mathfrak{g}$
has dimension $\rk \mathfrak{g}$. A regular nilpotent element is called \textit{principal nilpotent} -- the set of these elements is studied in detail in \cite{Ko1,Ko2}.

Let $G$ be the Lie group corresponding to $\mathfrak{g}$. Ginzburg appplies the aforementioned notions in \cite{Gi} to the commuting variety of $\mathfrak{g}$ by defining a \textit{principal nilpotent pair} to be a  pair $x = (x_1, x_2)\in\mathfrak{g}^2$ of commuting regular elements with simultaneous centralizer of dimension $\rk \mathfrak{g}$, such that for all $(t_1,t_2)\in\mathbf{C}^*\times\mathbf{C}^*$, there is  an element $g=g(t_1,t_2)\in G$ with $(t_1 x_1,t_2 x_2)=(g\cdot x_1,g\cdot x_2)$.  In particular, $x_1$ and $x_2$ are nilpotent elements of $\mathfrak{g}$.

The $\Ad G$--diagonal action on the set of principal nilpotent pairs admits only a finite number of orbits; and a classification of these for $\mathfrak{g}=\mathfrak{sl}_n(\mathbf{C})$ is given by the set of Young diagrams with $n$ boxes \cite{Gi}. In particular, the Young diagram determines one pair $(x_1,x_2)$ in the corresponding orbit explicitly: the boxes of the Young diagram correspond in a particular way to the standard basis vectors of $\mathbf{C}^n$, and the element $x_1$ moves these from left to right whereas $x_2$ moves them from top to bottom.  Elashvili and Panyushev classify the orbits for the remaining classical types in \cite{EP1} and for the exceptional types in \cite{EP2}. He shows that the number of $G$-orbits of these pairs is finite and classifies them for type A.  Further representation-theoretic results are found in \cite{Gi2} where he relates principal nilpotent pairs with subschemes of the normalization of the isospectral commuting variety. 

This setup is generalized as follows for $\mathfrak{g}=\mathfrak{gl}_n(\mathbf{C})$ in \cite{B4}:
Every graded vector space $V=\bigoplus_{(x,y)\in\mathbf{Z}_{\geq 1}^2} V_{x,y}$  determines a Young diagram as defined in Subsection \ref{ssect:gener_YD} which is called the \textit{shape} of $V$. A  graded nilpotent pair of $V$ is then defined to be a pair $(\varphi, \psi)$ of commuting nilpotent operators of $V$ , such that both nilpotent operators are compatible with the bi-grading in a natural commuting way: $\varphi$ respects the bigrading in horizontal direction $\varphi(V_{x,y})\subseteq V_{x-1,y}$ and $\psi$ respects the bigrading vertically $\psi(V_{x,y})\subseteq V_{x,y-1}$. In particular, a corresponding principal nilpotent pair appears in the situation, where $\dim V_{x,y}=1$ for every $x,y$. As we deal with graded vectorspaces, we only consider these tuples up to the action given by base change in the homogeneous components; in \cite{B4} a criterion is proved which states for each fixed shape, if there is only a finite number of orbits for every graded vector space of this shape.

In this article, we consider a further generalization of this setup, namely multi-graded, or $k$-graded, nilpotent tuples of a $k$-graded vector space $V= \bigoplus_{(x_1,..,x_k)\in\mathbf{Z}_{\geq 1}^k} V_{x_1,..,x_k}$ for an integer $k$. These are tuples  $(\varphi_1,...,\varphi_k)$ of pairwise commuting nilpotent operators on $V$ which respect the  grading in every direction $i$:  \[\varphi_i\mid_{V_{x_1,..,x_k}}(V_{x_1,..,x_k})\subseteq V_{x_1,...,x_i-1,...,x_k}.\]

The non-zero components of the multi-grading of $V$ induce a $k$-dimensional (generalized)  Young diagram $\Lambda$, that is, a multi-dimensional Young diagram in a natural way and call every corresponding $k$-graded nilpotent tuple $\Lambda$-shaped, thus. 

Our main result gives a complete and explicit answer to a standard Lie-theoretic question: Are there only finitely many $\Lambda$-shaped multi-graded nilpotent tuples up to base change by a Levi respecting the grading? 

The methods of proof are mainly taken from Representation Theory of finite-dimensional algebras. For each generalized Young diagram $\Lambda$, we define a so-called multi-staircase algebra $\A(\Lambda)$ in Section \ref{ssect:translation} which generalizes the concept of staircase algebras as in \cite{B4}: we define a quiver $\Q(\Lambda)$ whose vertices are in bijection to the boxes of $\Lambda$; and there is  an arrow from vertex $i$ to vertex $j$ if the box corresponding to $i$ is an above neighbour of the box corresponding to $j$ in one of the $k$ directions of $\Lambda$. Then $\A(\Lambda)$ is defined to be the path algebra of $\Q(\Lambda)$ with (roughly speaking) all commutativity relations which are determined by the squares of $\Lambda$. The above posed question is then answered by  discussing the representation types of all multi-staircase algebras which leads to our Classification Theorem.
\begin{classtheorem}\label{thm:class}
Let $\Lambda$ be a generalized proper $k$-dimensional Young diagram. The multi-staircase algebra $\A(\Lambda)$ is  
\begin{enumerate}
\item of finite representation type if and only if
$\Lambda$ is listed as a (generalized) Young diagram in Theorem \ref{thm:stair_rep_type} (a) or Theorem  \ref{thm:tri_stair_rep_type} (a),

\item tame concealed if and only if
$\Lambda$ is listed as a Young diagram in Theorem \ref{thm:stair_rep_type} (b), Theorem  \ref{thm:tri_stair_rep_type} (b) or Theorem \ref{thm:high_stair_rep_type} (b).

\item tame non-concealed if and only if $\Lambda$ is listed as a Young diagram in Theorem \ref{thm:stair_rep_type} (c), Theorem  \ref{thm:tri_stair_rep_type} (c) or Theorem \ref{thm:high_stair_rep_type} (c).
\end{enumerate}
Otherwise, and in particular if $k\geq 5$, the algebra $\A(\Lambda)$ is of wild representation type. 
\end{classtheorem}
Note that staircase algebras are discussed in \cite{B4} and the special case of rectangular partitions can be found in \cite{Le}, where $3$-dimensional results are available, too.

By Theorem \ref{thm:crit_multi_grad_pairs}, this classification provides our Main Theorem about $\Lambda$-shaped multi-graded nilpotent tuples as a direct consequence.
\begin{1maintheorem}
Let $\Lambda$ be a generalized $k$-dimensional partition. The number of $\Lambda$-shaped multi-graded nilpotent tuples (up to Levi-base change) is finite if and only if 
the algebra $\A(\Lambda)$ is of finite representation type. These cases are those of Classification Theorem (a). In particular, whenever $k\geq 4$, there are infinitely many such pairs (see Corollary  \ref{cor:high_grad_inf_tuples}).
\end{1maintheorem}
 Results are extended by considering certain minimal nullroots of particular tame concealed multi-staircase algebras which yield information about multi-graded nilpotent tuples of fixed vector spaces $V$.\\[2ex]
{\bf Acknowledgments:} The author would like to thank Markus Reineke for interesting discussions on the subject and Edward Richmond for his support with drawing generalized Young diagrams in Latex.

\section{Theoretical background}\label{sect:theo_backgrounnd}
We sketch some theoretical background now which we will refer to later when studying our main questions.
Let $K=\overline{K}$ be an algebraically closed field and let $\GL_n\coloneqq\GL_n(K)$ be the general linear group for a fixed integer $n\in\textbf{N}$ regarded as an affine variety. 
\subsection{Representation Theory (of finite-dimensional algebras)}\label{ssect:theory}
The concepts of this subsection can mostly be found in \cite{ASS, Pi}. A \textit{finite quiver} $\Q$ is a directed graph $\Q=(\Q_0,\Q_1,s,t)$, given by a finite set of \textit{vertices} $\Q_0$, a finite set of \textit{arrows} $\Q_1$ and two maps  $s,t:\Q_1\rightarrow \Q_0$ defining the source $s(\alpha)$ and the target $t(\alpha)$ of an arrow $\alpha$. 
A \textit{path} is a sequence of arrows $\omega=\alpha_s\punkte\alpha_1$, such that $t(\alpha_{k})=s(\alpha_{k+1})$ for all $k\in\{1,\punkte,s-1\}$.

 We define the \textit{path algebra} $K\Q$ as the $K$-vector space with a basis consisting of all paths in $\Q$, formally included is a path $\varepsilon_i$ of length zero for each $i\in \Q_0$ starting and ending in $i$. The multiplication  is defined by concatenation of paths, if possible, and equals $0$, otherwise.
 
Let us define the \textit{arrow ideal} $R_{\Q}$ of $K\Q$ as the (two-sided) ideal which is generated (as an ideal) by all arrows in $\Q$. In particular, the arrow ideal of $\Q$ equals the radical of $K\Q$ if $\Q$ does not contain oriented cycles. An arbitrary ideal $I\subseteq K\Q$ is called \textit{admissible} if there exists an integer $s$, such that $R_{\Q}^s\subseteq I\subseteq R_{\Q}^2$. Given such admissible ideal $I$, we denote by $I(i,j)$ the set of paths in $I$ starting in $i$ and ending in $j$.

Given such admissible ideal $I$, the path algebra of $\Q$, \textit{bound by} $I$ is defined as $\A:=K\Q/I$; it is a basic and finite-dimensional $K$-algebra \cite{ASS}; the elements of $I$ are the so-called \textit{relations} of $\A$. Then $\A$ is called \textit{triangular}, if $\Q$ does not contain an oriented cycle. Let us fix such triangular algebra $\A=K\Q/I$ in the following.

Let $\rep\A$ be the abelian $K$-linear category of all finite-dimensional $\A$-representations which is equivalent to the category of  \textit{$K$-representations} of $\Q$, which are \textit{bound by $I$}, defined as follows: 
The objects are given by tuples 
$((M_i)_{i\in \Q_0},(M_\alpha)_{\alpha\in \Q_1})$, 
where the $M_i$ are $K$-vector spaces, and the $M_\alpha\colon M_{s(\alpha)}\rightarrow M_{t(\alpha)}$ are $K$-linear maps. For each path $\omega$ in $\Q$ as above, we denote $M_\omega=M_{\alpha_s}\cdot\punkte\cdot M_{\alpha_1}$ and ask a representation $M$ to fulfill $\sum_\omega\lambda_\omega M_\omega=0$ whenever $\sum_\omega\lambda_\omega\omega\in I$. A \textit{morphism of representations} $M=((M_i)_{i\in \Q_0},(M_\alpha)_{\alpha\in \Q_1})$ and
 \mbox{$M'=((M'_i)_{i\in \Q_0},(M'_\alpha)_{\alpha\in \Q_1})$} consists of a tuple of $K$-linear maps $(f_i\colon M_i\rightarrow M'_i)_{i\in \Q_0}$, such that $f_jM_\alpha=M'_\alpha f_i$ for every arrow $\alpha\colon i\rightarrow j$ in $\Q_1$. 
 
The \textit{dimension vector} of an $\A$-representation $M$  is defined by $\dimv M\in\mathbf{N}^{\Q_0}$; in more detail $(\dimv M)_{i}=\dim_K M_i$ for $i\in \Q_0$. 
Let us fix such a dimension vector $\df\in\mathbf{N}^{\Q_0}$ and denote by $\rep\A(\df)$ the full subcategory of $\rep\A$ which consists of all representations of dimension vector $\df$.

By defining the affine space $R_{\df}K\Q:= \bigoplus_{\alpha\colon i\rightarrow j}\Hom_K(K^{d_i},K^{d_j})$, it is clear that its points $m$ naturally correspond to representations $M\in\rep K\Q(\df)$ with $M_i=K^{d_i}$ for $i\in \Q_0$. 
 Via this correspondence, the set of such representations bound by $I$ corresponds to a closed subvariety $R_{\df}\A\subset R_{\df}K\Q$.
 
The algebraic group $\GL_{\df}=\prod_{i\in \Q_0}\GL_{d_i}$ acts on $R_{\df}K\Q$ and on $R_{\df}\A$ via base change, furthermore the $\GL_{\df}$-orbits $\Orb_M$ of this action are in bijection to the isomorphism classes of representations $M$ in $\rep\A(\df)$.

A non-zero representation $M$ is called \textit{indecomposable}, if it cannot be decomposed into a direct sum $M\cong M_1\oplus M_2$, where $M_1$ and $M_2$ are non-zero representations. Due to Krull, Remak and Schmidt, every finite-dimensional $\A$-representation decomposes into an -- up to isomorphism -- unique direct sum of indecomposables. For certain classes of finite-dimensional algebras, a convenient tool for the classification of these indecomposable representations is the \textit{Auslander-Reiten quiver} $\Gamma_{\A}=\Gamma(\Q,I)$ of $\rep K\Q/I$. Its vertices $[M]$ are given by the isomorphism classes of indecomposable representations of $\rep K\Q/I$; the arrows between two such vertices $[M]$ and $[M']$ are parametrized by a basis of the so-called \textit{space of irreducible morphisms}. One standard technique to calculate $\Gamma_{\A}$ is the \textit{knitting process} (see, for example, \cite[IV.4]{ASS}).

Let $\A= K\Q/I$ be an algebra of a quiver with only commutativity and zero relations. We say that an algebra $\B = K\Q'/I'$ is a \textit{convex subcategory} of $\A$, if $\Q'$ is a pathclosed \textit{subquiver} of $\Q$ (that is, for every two vertices $x$ and $y$ in $\Q'$, every path of $\Q$ from $x$ to $y$ is a path in $\Q'$, as well) and $I'\coloneqq \langle I(i,j)\mid i,j\in \Q'_0\rangle$. 

An indecomposable projective $P_i$ at vertex $i$ has a so-called \textit{separated} radical, if for arbitrary two non-isomorphic direct summands of its radical, their supports are contained in different components of the subquiver $\Q$ obtained by deleting all starting points of paths ending in $i$. We say that $\A$  \textit{fulfills the separation condition}, if every projective indecomposable has a separated radical. If this condition is fulfilled, $\A$ admits a preprojective component, see \cite{Bo4}. Recall that a preprojective component is a component  of the Auslander-Reiten quiver of  $\A$ which consists only of Auslander-Reiten translates \cite{ASS} of projective representations and does not contain oriented cycles \cite{HaRi}.  

In general, the definition of an algebra to be \textit{strongly simply connected} algebra is quite involved. In case of a triangular algebra $\A$, there is an equivalent description, though : $\A$ is \textit{strongly simply connected} if and only if every convex subcategory of $\A$ satisfies the separation condition \cite{Sko2}.

The algebra $\A$ is called of \textit{finite representation type}, if there are only finitely many isomorphism classes of indecomposable representations. If it is not of finite representation type, the algebra is of \textit{infinite representation type}. These infinite types split up into exactly two disjoint cases by Drozd's Dichotomy statement \cite{Dr}: 
 \begin{itemize}
 \item  $\A$ has \textit{tame representation type} (or \textit{is tame}) if for every integer $n$ there is an integer $m_n$ and there are finitely generated $K[x]$-$\A$-bimodules $M_1,\punkte,M_{m_n}$ which are free over $K[x]$, such that for all but finitely many isomorphism classes of indecomposable right $\A$-modules $M$ of dimension $n$, there are elements $i\in\{1,\punkte,m_n\}$ and $\lambda\in K$, such that  $M\cong  K[x]/(x-\lambda)\otimes_{K[x]}M_i$.
 \item Or $\A$ has \textit{wild representation type} (or \textit{is wild}) if there is a finitely generated $K\langle X,Y\rangle$-$\A$-bimodule $M$ which is free over $K\langle X,Y\rangle$ and sends non-isomorphic finite-dimensional indecomposable $K\langle X,Y\rangle$-modules via the functor $\_\otimes_{K\langle X,Y\rangle}M$ to non-isomorphic indecomposable $\A$-modules.
\end{itemize} 
The notion of a tame algebra $\A$ yields that there are at most one-parameter families of pairwise non-isomorphic indecomposable $\A$-modules; in the wild case there are parameter families of arbitrary many parameters.

For a triangular algebra $\A = K\Q/I$, the \textit{Tits form} $q_{\A}:\mathbf{Z}^{\Q_0}\rightarrow \mathbf{Z}$  is the integral quadratic form defined by 
\[q_{\A}(v) = \sum_{i\in\Q_0} v_i^2 - \sum_{\alpha:i\rightarrow j\in\Q_1} v_iv_j + \sum_{i,j\in\Q_0} r(i,j)v_iv_j;\]
here $r(i,j)$ equals the number of elements in $R\cap I(i,j)$ whenever $R$ is a minimal set of generators of $I$, such that $R\subseteq \bigcup_{i,j\in\Q_0} I(i,j)$.
The quadratic form $q_{\A}$ is called \textit{weakly positive}, if $q_{\A}(v) > 0$ for every $v\in\mathbf{N}^{\Q_0}$; and \textit{(weakly) non-negative}, if $q_{\A}(v) \geq 0$ for every $v\in\mathbf{Z}^{\Q_0}$ (or $v\in\mathbf{N}^{\Q_0}$, respectively). If $q_{\A}$ is weakly non-negative, we define its \textit{radical} by $\rad q_{\A}:=\{u\in\mathbf{Z}^{\Q_0} \mid q_{\A}(u)=0\}$ and call its elements \textit{nullroots}.

The definiteness of the Tits form is closely related to the representation type of $\A$, and many results are, for example, summarized by De la Pe\~na and Skowro\'{n}ski in \cite{DlPS}.
The following criterion for finite representation type is due to Bongartz \cite{Bo4}.
\begin{theorem}\label{thm:crit_finite}
Let $\A = K\Q/I$ be a triangular algebra, which admits a preprojective component. Then $\A$ is representation-finite if and only if the Tits form $q_{\A}$ is weakly positive. Furthermore, if the equivalent conditions hold true, then the dimension vector function $X\mapsto \dim X$ induces a bijection between the set of isomorphism classes of indecomposable $\A$-modules and the set of positive roots of $q_{\A}$.
\end{theorem}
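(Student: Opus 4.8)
The plan is to prove the equivalence and the root bijection together, by combining Auslander--Reiten theory on the preprojective component with the homological reading of the Tits form and the arithmetic of weakly positive unit forms; after reducing to the connected case (both conditions being compatible with the decomposition of $\A$ into blocks) I would fix the principle that for an indecomposable $M$ of dimension vector $v=\dimv M$ the value $q_\A(v)$ is governed by the alternating sum $\dim\End(M)-\dim\Ext^1(M,M)+\dim\Ext^2(M,M)$, the relation term $r(i,j)$ recording precisely the $\Ext^2$-contribution of a minimal generating set of $I$. In particular a directing indecomposable, being a brick without self-extensions, satisfies $q_\A(v)=1$, so its dimension vector is a positive root.

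For the implication ``$q_\A$ weakly positive $\Rightarrow$ representation-finite'' I would run the knitting algorithm on the preprojective component, starting from the indecomposable projectives. The inductive claim is that every indecomposable so produced is directing, hence has dimension vector a positive root of $q_\A$ with $q_\A(\dimv M)=1$; the almost-split sequences supply the additivity $\dimv B=\dimv A+\dimv C$ that keeps the produced vectors inside the root system. I would then invoke Ovsienko's bound that a weakly positive unit form has only finitely many positive roots (their coordinates being at most $6$). Since directing modules are determined up to isomorphism by their dimension vectors, the preprojective component contains only finitely many modules; as it contains all projectives of the connected algebra $\A$, this finite component must exhaust $\Gamma_\A$, so $\A$ is representation-finite.

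For the converse I would argue by contraposition, producing a representation-infinite convex subcategory whenever $q_\A$ fails to be weakly positive. If $q_\A(v)\leq 0$ for some $v\in\mathbf{N}^{\Q_0}\setminus\{0\}$, I would pass to a convex subcategory $\B$ supported on $\supp v$ and shrink it until its restricted Tits form is critical, i.e.\ minimal among non-weakly-positive unit forms. Such critical forms, and the algebras realising them (the tame concealed algebras together with the remaining minimal representation-infinite types), are classified, and each carries an infinite family of indecomposables; since indecomposables of a convex subcategory are indecomposables of $\A$, the latter is representation-infinite. The separation condition, and hence the existence of a preprojective component, is inherited by convex subcategories, so this reduction stays within the hypotheses.

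For the root bijection under the equivalent hypotheses, representation-finiteness together with a preprojective component forces the component to exhaust $\Gamma_\A$, so every indecomposable is preprojective and thus directing; the principle of the first paragraph then gives $q_\A(\dimv M)=1$ for each, exhibiting its dimension vector as a positive root, while uniqueness of directing modules with a prescribed dimension vector yields injectivity. Surjectivity would follow by checking that the knitting procedure realises every positive root. The step I expect to be the main obstacle is the converse direction: faithfully controlling the $\Ext^2$-term encoded by $r(i,j)$ for a genuinely non-hereditary triangular algebra, so that a non-positive value of $q_\A$ can really be localised to a critical convex subcategory --- this is exactly where the preprojective-component hypothesis and the classification of minimal representation-infinite algebras must be deployed with care.
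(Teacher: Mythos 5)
First, a structural remark: the paper does not prove this statement at all --- it is quoted as Bongartz's criterion with a citation to \cite{Bo4} --- so your proposal can only be measured against the standard proof. Your direction ``$q_{\A}$ weakly positive $\Rightarrow$ representation-finite'' does follow that standard proof: knit the preprojective component, observe that its modules are directing and determined by their dimension vectors, show these are positive roots, invoke Ovsienko's bound, and use the fact that a finite connected component of $\Gamma_{\A}$ exhausts the module category. One caveat: your guiding ``principle'' $q_{\A}(\dimv M)=\dim\End(M)-\dim\Ext^1(M,M)+\dim\Ext^2(M,M)$ is an identity only when $\operatorname{gl.dim}\A\leq 2$; in general one only has the two inequalities $\dim\End(M)-\dim\Ext^1(M,M)\leq q_{\A}(\dimv M)\leq\dim\End(M)-\dim\Ext^1(M,M)+\dim\Ext^2(M,M)$, which is what must actually be used for directing modules.

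The genuine gap is in your converse (``representation-finite $\Rightarrow$ weakly positive''). Your reduction to a critical convex subcategory relies on two claims that fail under the stated hypotheses. (a) You assert that the separation condition, hence a preprojective component, is inherited by convex subcategories; but the theorem assumes only that $\A$ \emph{admits a preprojective component} --- not the separation condition --- and neither property passes to convex subcategories in general (this failure is precisely why ``strongly simply connected'' is defined by imposing the separation condition on \emph{all} convex subcategories rather than on $\A$ alone). Without such inheritance you cannot identify your critical convex subcategory within the classified lists. (b) Even granting that, the argument is circular: the identification of critical algebras with the tame concealed ones of the BHV-list \cite{Bo5,HaVo,GaRo} is itself built on the criterion you are proving --- to know that an algebra whose Tits form is not weakly positive is representation-infinite is exactly the implication at stake. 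The standard proof of this direction avoids all of this and needs no preprojective component whatsoever: if $\A$ is representation-finite, then for each $\df$ the variety $R_{\df}\A$ has only finitely many $\GL_{\df}$-orbits, so some orbit $\Orb_M$ is dense in each irreducible component; since $R_{\df}\A$ is cut out of the affine space $R_{\df}K\Q$ by at most $\sum_{i,j}r(i,j)d_id_j$ equations, every component has dimension at least $\sum_{\alpha:i\rightarrow j}d_id_j-\sum_{i,j}r(i,j)d_id_j$, and comparing with $\dim\Orb_M=\sum_i d_i^2-\dim\End(M)$ yields $q_{\A}(\df)\geq\dim\End(M)\geq 1$. The same count also repairs your vague surjectivity step: for a positive root $\df$ the dense orbit satisfies $1=q_{\A}(\df)\geq\dim\End(M)\geq 1$, so $\End(M)=K$ and $M$ is an indecomposable with dimension vector $\df$.
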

Assume that $\Gamma_{\A}$ has a preprojective component. The algebra $\A$ is called  \textit{critical} if $q_{\A}$ is not weakly positive, but every proper restriction of $q_{\A}$ is weakly positive. The term "critical" is actually intuitive, since the conditions of Theorem \ref{thm:crit_finite} are equivalent to $\A$ not having a convex subcategories which is critical. The critical algebras are the tame concealed algebras which are listed in the famous Bongartz-Happel-Vossieck list (BHV-list)  \cite{Bo5,HaVo,GaRo}. These are the minimal tame ones \cite{HaVo}, thus: if an algebra contains a proper convex subcategory which is tame concealed, it cannot be tame concealed itself.

 The algebra $\A$ is called \textit{hypercritical} if $q_{\A}$ is not weakly non-negative while every proper restriction of $q_{\A}$ is weakly non-negative. The hypercritical algebras have been classified \cite{Un}. For strongly simply connected algebras, they turn out to be the minimal wild algebras as the following classification of tame types due to of Br\"ustle, De la Pe\~na and Skowro{\'n}ski yields \cite{BdlPS}.
 \begin{theorem}\label{thm:crit_tame}
 Let $\A$ be strongly simply connnected of infinite representation type. Then  the following are equivalent:
\begin{enumerate}
\item $\A$ is tame;
\item $q_{\A}$ is weakly non-negative;
\item $\A$ does not contain a full convex subcategory which is hypercritical.
\end{enumerate}
\end{theorem}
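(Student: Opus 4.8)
The plan is to prove the three conditions equivalent by closing the cycle $(1)\Rightarrow(3)\Rightarrow(2)\Rightarrow(1)$, which cleanly separates the combinatorics of the Tits form from the deep representation-theoretic input. Since $\A$ is representation-infinite, Drozd's dichotomy \cite{Dr} lets me read condition $(1)$ as ``$\A$ is not wild'' throughout. The one recurring technical fact I would isolate first is the behaviour of the Tits form under passage to a full convex subcategory $\B=K\Q'/I'$: because convexity forces the relation counts $r(i,j)$ to agree for $i,j\in\Q'_0$, the form $q_{\B}$ is exactly the restriction of $q_{\A}$ to $\mathbf{Z}^{\Q'_0}\subseteq\mathbf{Z}^{\Q_0}$, and strong simple connectedness is inherited by $\B$.

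For $(1)\Rightarrow(3)$ I would argue contrapositively. Suppose $\A$ has a hypercritical full convex subcategory $\B$. Each of the finitely many hypercritical algebras classified by Unger \cite{Un} is directly seen to be wild, so $\B$ is wild. The extension-by-zero functor $\rep\B\hookrightarrow\rep\A$ along the convex subcategory is exact and fully faithful, hence preserves indecomposability and reflects isomorphisms; composing a wildness embedding for $\B$ with it exhibits $\A$ as wild. Thus a non-wild (tame) $\A$ admits no hypercritical full convex subcategory.

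For $(3)\Rightarrow(2)$, again contrapositively, assume $q_{\A}$ is not weakly non-negative and choose $v\in\mathbf{N}^{\Q_0}$ with $q_{\A}(v)<0$ of minimal support $S$. Using the restriction fact above, the full convex subcategory $\B$ carried by $S$ satisfies $q_{\B}(v)<0$, while minimality of $S$ makes every proper restriction of $q_{\B}$ weakly non-negative; hence $\B$ is hypercritical and $(3)$ fails. The point needing genuine care here is that $S$ can be chosen to support a \emph{convex} subcategory; this is where strong simple connectedness enters, and it is the only nontrivial combinatorial step (the analogue for weakly positive forms being Ovsienko's theorem).

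The implication $(2)\Rightarrow(1)$ is the main obstacle and the genuine content of \cite{BdlPS}: weak non-negativity of $q_{\A}$ must be promoted to tameness of $\A$, and I do not expect any short argument. The plan is to lean on the structure theory of strongly simply connected algebras, reducing to the minimal representation-infinite convex subcategories and showing that, in the absence of any hypercritical restriction, each such piece is tame concealed or admits a separating family of tubes; one then reassembles global tameness through covering theory and the theory of coil and tubular enlargements, bounding all families by a single parameter. The crux -- where essentially all the difficulty sits -- is to certify that weak non-negativity really forbids a second parameter along every sincere substructure, i.e.\ that the purely numerical condition $(2)$ is strong enough to control the geometry of $\rep\A(\df)$ uniformly in $\df$.
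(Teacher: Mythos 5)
First, a point of orientation: the paper does not prove this theorem at all. It is stated as a quotation of the classification due to Br\"ustle, De la Pe\~na and Skowro\'nski \cite{BdlPS} and is used throughout as a black box (together with its consequence, Corollary \ref{cor:crit_wild}). So the question is whether your proposal would constitute an independent proof, and it would not. Your cycle $(1)\Rightarrow(3)\Rightarrow(2)\Rightarrow(1)$ is organized sensibly, and $(1)\Rightarrow(3)$ is fine: hypercritical algebras are wild by inspection of Unger's list \cite{Un}, wildness passes from a full convex subcategory to the ambient algebra, and Drozd's dichotomy \cite{Dr} converts ``tame'' into ``not wild''. But the implication $(2)\Rightarrow(1)$ --- which you yourself identify as the crux --- is not proved in your proposal; it is only described as a plan (``lean on the structure theory \dots\ reassemble global tameness through covering theory and coil enlargements''). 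Promoting the purely numerical condition of weak non-negativity of $q_{\A}$ to tameness of $\A$ is precisely the content of \cite{BdlPS}, a long and difficult paper, and gesturing at its strategy is a citation, not a proof. As a proof attempt, the proposal therefore has a genuine gap exactly where the substance of the theorem lies.

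There is also a smaller unresolved step in your $(3)\Rightarrow(2)$. Minimality of the support of a vector $v\in\mathbf{N}^{\Q_0}$ with $q_{\A}(v)<0$ does not by itself produce a hypercritical convex subcategory: the full subcategory on $\supp(v)$ need not be convex, and after passing to its convex hull $\B$ the vector $v$ need not have minimal support relative to $\B_0$, so a proper restriction of $q_{\B}$ (hypercriticality quantifies over \emph{all} proper subsets of $\B_0$, not only convex ones) can again fail to be weakly non-negative; even choosing $\B$ minimal among convex subcategories with non-weakly-non-negative Tits form does not immediately close this loop, because the convex hull of the support of a new negative vector inside $\B$ may be all of $\B$. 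You flag this as ``the only nontrivial combinatorial step'' but do not carry it out, and it does require an argument specific to strongly simply connected algebras. In sum: the paper buys the whole equivalence by citation, while your proposal genuinely establishes only $(1)\Rightarrow(3)$ and sketches the remaining two implications, one of which is the theorem's actual core.
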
 
By dichotomy, Theorem \ref{thm:crit_tame} yields a sufficient criterion for wildness.
\begin{corollary}\label{cor:crit_wild}
 Let $\A$ be strongly simply connnected. Whenever there exists $v\in \mathbf{N}^{\Q_0}$, such that $q_{\A}(v)\leq -1$, then $\A$ is of wild representation type.
\end{corollary}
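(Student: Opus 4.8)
The plan is to read the corollary purely as a bookkeeping consequence of Theorem \ref{thm:crit_tame} together with Drozd's dichotomy \cite{Dr}; the only genuine preliminary is to rule out finite representation type. So I would fix a strongly simply connected algebra $\A = K\Q/I$ and a vector $v\in\mathbf{N}^{\Q_0}$ with $q_{\A}(v)\leq -1$, and first record two immediate observations: such a $v$ is nonzero (since $q_{\A}(0)=0$), and the value $q_{\A}(v)<0$ witnesses at once that $q_{\A}$ is neither weakly positive nor weakly non-negative.

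Next I would argue that $\A$ is of infinite representation type, which is the step where strong simple connectedness does its work. Since $\A$ is strongly simply connected, every convex subcategory of $\A$ satisfies the separation condition; applying this to $\A$ regarded as a convex subcategory of itself shows that $\A$ satisfies the separation condition, and hence by \cite{Bo4} admits a preprojective component. This is precisely the hypothesis needed to invoke Theorem \ref{thm:crit_finite}, which states that $\A$ is representation-finite if and only if $q_{\A}$ is weakly positive. As $q_{\A}$ is not weakly positive, $\A$ cannot be of finite representation type.

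Finally, with $\A$ now known to be strongly simply connected of infinite representation type, Theorem \ref{thm:crit_tame} applies, and its equivalence $(1)\Leftrightarrow(2)$ gives that $\A$ is tame if and only if $q_{\A}$ is weakly non-negative. Since $q_{\A}(v)\leq -1$ violates weak non-negativity, $\A$ is not tame, and Drozd's dichotomy then forces the infinite-type algebra $\A$ to be wild, as claimed. I do not expect a serious obstacle here: the result is a formal corollary of the two quoted theorems and dichotomy. The one point requiring a little care is the verification that $\A$ admits a preprojective component, so that Theorem \ref{thm:crit_finite} may legitimately be used to exclude finite type; this is exactly the place where the strong simple connectedness of $\A$, via the separation condition, is essential.
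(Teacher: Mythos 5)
Your proposal is correct and follows essentially the same route as the paper, which presents the corollary as an immediate consequence of Theorem \ref{thm:crit_tame} combined with Drozd's dichotomy (``By dichotomy, Theorem \ref{thm:crit_tame} yields a sufficient criterion for wildness''). Your extra step of excluding finite representation type via the separation condition, the preprojective component, and Theorem \ref{thm:crit_finite} is exactly the detail the paper leaves implicit, and it is needed to make Theorem \ref{thm:crit_tame}'s hypothesis of infinite representation type legitimate.
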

\subsection{Generalized Young diagrams}\label{ssect:gener_YD}
Let $N\in\mathbf{N}$ be an integer, then a \textit{partition} of $N$ is a decreasing tuple $(\lambda_1,...,\lambda_k)$ of positive integers, such that $\lambda_1+...+\lambda_k=N$ for some $k\in\mathbf{N}$; let $\Pa(N)$ denote the set of partitions of $N$ and by $\Pa$ denote the set of all partitions of integers.  For easier notation, we merge similar entries of partitions by potencies, for example $(1,1,2,2,2,6,8,8)=: (1^2,2^3,6,8^2)$.  Corresponding to $\lambda$ there is a \textit{Young diagram}; we fill its boxes with tuples $(x,y)$ as sketched in the example and identify partitions and Young diagrams:

\begin{example}\label{ex:partition}
Let $N=10$ and $\lambda=(3,3,2,1,1)$. Then the Young diagram is
 \begin{center}
 \begin{ytableau}
\none &\none&\none&_{(2,3)}& _{(1,3)} \\
\none &\none&_{(3,2)}& _{(2,2)} & _{(1,2)}  \\
_{(5,1)}&_{(4,1)} & _{(3,1)}&_{(2,1)} &_{(1,1)}\\
\end{ytableau}
\end{center}
\end{example}
Thus, we can identify each partition, or each Young diagram, with a subset of $\mathbf{N}_{\geq 1}\times \mathbf{N}_{\geq 1}$. Furthermore, a subset $X\subseteq\mathbf{N}_{\geq 1}\times \mathbf{N}_{\geq 1}$ defines a partition if and only if the condition "If $(x,y)\in X$, then $(a,b)\in X$ whenever $a\leq x$ and $b\leq y$" is fulfilled. This in particular yields
\[YD^{(2)}\coloneqq \{X\subseteq\mathbf{N}_{\geq 1}\times \mathbf{N}_{\geq 1}\mid (x,y)\in X \Rightarrow (a,b)\in X,~ \forall a\leq x, b\leq y\} \cong\Pa.\]

This setup can be generalized to $k\in \mathbf{N}$ dimensions as follows: Let
\[YD^{(k)}\coloneqq \{X\subseteq\mathbf{N}_{\geq 1}^k\mid (x_1,...,x_k)\in X \Rightarrow (a_1,...,a_k)\in X,~ \forall a_i\leq x_i, \forall i\}\]
We call an element of $YD^{(k)}$ a \textit{$k$-dimensional} or \textit{generalized Young diagram}.

Another special case of generalized Young diagrams is of particular interest: namely, $3$-dimensional Young diagrams which in literature are usually called \textit{plane partitions}. These are  the subject of study of several articles and books, either as combinatorial objects themselves or in the context of  deeper theory, see for example \cite{And,Mac}. We subdivide them into two kinds, flat ones and non-flat ones.

Let $\Lambda$ be a $3$-dimensional Young diagram, then $\Lambda$ is called \textit{flat}, if one of the following conditions is fulfilled:
\begin{itemize}
\item $\Lambda\subseteq \{(1,x,y),(x,1,y),\mid x,y\geq 1\}$
\item $\Lambda\subseteq \{(1,x,y),(x,y,1)\mid x,y\geq 1\}$
\item $\Lambda\subseteq \{(x,1,y),(x,y,1)\mid x,y\geq 1\}$
\end{itemize}

In particular, every flat $3$-dimensional Young diagram is given by two decreasing partitions $\lambda=(\lambda_1,...,\lambda_k)$ and $\mu=(\mu_1,...,\mu_l)$, such that $\lambda_1=\mu_1=:x$. We denote $\Lambda=(\lambda_k,...,\lambda_2,\underline{x},\mu_2,...,\mu_l)$. If $\Lambda$ is not flat, it is called \textit{non-flat}.

Graphically speaking, every flat $3$-dimensional Young Diagram can be \textquotedblleft properly embedded into the plane\textquotedblright. In more detail, if for example 
\[\Lambda\subseteq \{(1,x,y),(x,1,y),\mid x,y\geq 1\},\]
 then consider the subset of $\Lambda$ of tuples where the first entry equals $1$. By deleting this entry in every tuple, we obtain a partition $\lambda=(\lambda_1,...,\lambda_k)$. In the same way, deletion of the second entry $1$ gives us a partition $\mu=(\mu_1,...,\mu_l)$. Since 
 \[\lambda_1=\mu_1=\sharp\{(1,1,y)\mid y\geq 1, (1,1,y)\in\Lambda\},\] the embedded diagram is $\Lambda=(\lambda_k,...,\lambda_2,\underline{\lambda_1},\mu_2,...,\mu_l)$.
\begin{example}\label{ex:3_YD}
The $3$-dimensional Young diagram

\begin{center}

\begin{tikzpicture}[scale=0.5]
\planepartition{}{
    {4,2,1},
    {3,0,0},
    {3,0,0}}
\end{tikzpicture}
\end{center}
is flat and its tuple of partitions is given by $(3,3,\underline{4},2,1)$, that is, $\lambda=(4,3,3)$ and $\mu=(4,2,1)$. We can embed it into the plane by

\begin{center}

 \begin{ytableau}
\none & \none &4&\none&\none\\
3 &3  &&\none &\none \\
 &&&2 & \none\\
&&& & 1\\
\end{ytableau}
\end{center}

The following $3$-dimensional Young diagram is non-flat (and minimal with this property).

\begin{center}

\begin{tikzpicture}[scale=0.5]
\planepartition{}{
     {2,1},
    {1,0},
   }
\end{tikzpicture}
\end{center}
 \end{example}
 \subsection{Multi-graded nilpotent tuples}\label{ssect:multigrad}

Let $k\in\mathbf{N}$ and let $V= \bigoplus_{(x_1,..,x_k)\in\mathbf{Z}_{\geq 1}^k} V_{x_1,..,x_k}$ be an $N$-dimensional multi-graded $K$-vector space; we formally set $V_{x_1,..,x_k}:=0$ for $(x_1,..,x_k)\notin \mathbf{Z}_{\geq 1}^k$ and denote $\underline{x}:=(x_1,..,x_k)$. Let $V$ be called \textit{$k$-graded} in general, \textit{bi-graded} for $k=2$ and \textit{tri-graded} for $k=3$.

We denote by $\N(V)$ the \textit{nilpotent cone} of nilpotent operators on $V$. The \textit{$k$-fold nilpotent commuting variety} of $V$ is defined by
\[\CC^k(V):=\{(\varphi_1,...,\varphi_k)\in \N(V)^k\mid [\varphi_i, \varphi_j]=0~\mathrm{for~all}~i,j\},\]
its elements are called \textit{commuting nilpotent tuples}.

Such tuple is called \textit{graded nilpotent tuple} or \textit{$k$-graded nilpotent tuple} of  $V$, if $\varphi_i$ can be restricted via 
\[\begin{array}{rlll}
\varphi_i(\underline{x}):=\varphi_i\mid_{V_{x_1,..,x_k}}:& V_{x_1,..,x_k}& \rightarrow& V_{x_1,...,x_i-1,...,x_k}\\
&v&\mapsto& \varphi_i(v)
\end{array} \] 

We define 
 the \textit{shape} of $V$ by 
\[\sh(V):=\{(s_1,...,s_k)\mid\forall i\leq k:~ \exists p_i\in\mathbf{Z}_{\geq 1},~ s_i\leq p_i~{\rm such~that}~  V_{p_1,...,p_k}\neq 0 \}.\]
 The shape of such $k$-graded vector space $V$ is a $k$-dimensional Young diagram $Y(V)$ by definition and the special case of bi-graded nilpotent pairs coincides with the notion of  graded nilpotent pairs  for which the shape $\lambda$ of $V$ is given by an ordinary Young diagram $Y(\lambda)=Y(V)$ \cite{B4}.
 
 Let $\Lambda$ be the shape of $V$, then every graded nilpotent tuple of $V$ is called a \textit{$\Lambda$-graded nilpotent tuple}. 

\begin{example}\label{ex:multi_grad}
Let $V:= \bigoplus_{s,t,u\in\mathbf{Z}_{\geq 1}} V_{s,t,u}$ be a tri-graded $K$-vector space, such that $V_{1,1,1}=V_{1,2,2}=K$, $V_{2,1,1}=V_{1,2,1}=V_{1,1,2}=V_{2,1,2}= K^2$, $V_{2,2,1}=K^3$ and $V_{s,t,u}=0$, otherwise. Let $(\varphi^{(1)}, \varphi^{(2)}, \varphi^{(3)})$ be a graded nilpotent tuple on $V$. Then we can illustrate the latter by
\begin{center}\small\begin{tikzpicture}
\matrix (m) [matrix of math nodes, row sep=1.81em,
column sep=1.8em, text height=0.9ex, text depth=0.1ex]
{          & V_{2,1,1} & V_{2,2,1}  \\
 V_{2,1,2} & V_{1,1,1} & V_{1,2,1}  \\
 V_{1,1,2} & V_{1,2,2} &            \\
 };
\path[->]
(m-1-3) edge  (m-1-2)
(m-1-2) edge  (m-2-2)
(m-2-1) edge  (m-1-2)
(m-1-3) edge  (m-2-3)
(m-2-3) edge  (m-2-2)
(m-2-1) edge  (m-3-1)
(m-3-1) edge  (m-2-2)
(m-3-2) edge  (m-2-3)
(m-3-2) edge  (m-3-1)
(m-2-1) edge[-,dotted]  (m-2-2)
(m-1-3) edge[-,dotted]  (m-2-2)
(m-3-2) edge[-,dotted]  (m-2-2)
;\end{tikzpicture}\end{center} 

The shape of $V$ is given by the generalized Young diagram
\begin{center}

\begin{tikzpicture}[scale=0.5]
\planepartition{}{
    {2,2},
    {2,1}}
\end{tikzpicture}
\end{center}

\end{example}

\section{Connection with finite-dimensional algebras}

 \subsection{Translation}\label{ssect:translation}
Corresponding to a generalized Young diagram $\Lambda$, let us define the quiver $\Q(\Lambda)$: its vertices are given by the tuples  appearing in $\Lambda$; and there is an arrow $(x_1,...,x_n) \rightarrow (y_1,...,y_n)$ if and only if  there is exactly one index $i$, such that $y_i=x_i-1$ and $y_k=x_k$ for $k\neq i$. This arrow will be denoted by $\varphi^{(i)}_{x_1,...,x_n}$.
Especially for the bi- and tri-graded cases, the quiver $\Q(\Lambda)$ can easily be visualized by the generalized Young diagram.

Let us define the path algebra $\A(\Lambda)=K\Q(\Lambda)/I$ with relations given by $I:=I(\Lambda)$, which is the $2$-sided admissible ideal generated by all commutativity relations in the appearing squares in $\Q(\Lambda)$ which are, if defined, of the form $\varphi^{(i)}_{x_1,.x_{i},.x_j,.,x_n}\varphi^{(j)}_{x_1,.x_i,.x_j+1,.,x_n} - \varphi^{(j)}_{x_1,.x_i-1,.x_j+1,.,x_n}\varphi^{(i)}_{x_1,.,x_i,.x_j+1,.,x_n}$. Since $I$ is admissible and $\Q(\Lambda)$ is connected, $\A(\Lambda)$ is a basic, connected,  finite-dimensional $K$-algebra.
\begin{definition}\label{def:multi_staircase}
Let $\A$ be a finite-dimensional $K$-algebra. It is called a \textit{multi-graded staircase algebra} of the generalized Young diagram $\Lambda$, if  $\A\cong\A(\Lambda)$. For a $k$-dimensional Young diagram, the corresponding algebra is called $k$-graded staircase algebra, or just $k$-staircase algebra.
\end{definition}
We denote the Tits quadratic form of $\A(\Lambda)$ by $q_{\Lambda}:=q_{\A(\Lambda)}$ and the Auslander-Reiten quiver by $\Gamma(\Lambda)$. 

Let us consider an example to illustrate these definitions before discussing multi-graded staircase algebras and their properties in detail.
\begin{example}\label{ex:multi_staircase}
The quiver of the tri-graded vector space in Example \ref{ex:multi_grad} is given by
\begin{center}\small\begin{tikzpicture}[descr/.style={fill=white}]
\matrix (m) [matrix of math nodes, row sep=2.81em,
column sep=3.8em, text height=1.3ex, text depth=0.2ex]
{          & (2,1,1) & (2,2,1)  &    &  & \bullet &  \bullet\\
 (2,1,2) & (1,1,1) & (1,2,1) & \hat{=} & \bullet   &  \bullet & \bullet  \\
 (1,1,2) & (1,2,2) &      & &\bullet  & \bullet &      \\
 };
\path[->]
(m-1-3) edge node[above]{$\varphi^{(2)}_{2,2,1}$} (m-1-2)
(m-1-2) edge node[descr]{$\varphi^{(1)}_{2,1,1}$} (m-2-2)
(m-2-1) edge node[above left]{$\varphi^{(3)}_{2,1,2}$} (m-1-2)
(m-1-3) edge node[right]{$\varphi^{(1)}_{2,2,1}$} (m-2-3)
(m-2-3) edge node[descr]{$\varphi^{(2)}_{1,2,1}$} (m-2-2)
(m-2-1) edge node[left]{$\varphi^{(1)}_{2,1,2}$} (m-3-1)
(m-3-1) edge node[descr]{$\varphi^{(3)}_{1,1,2}$} (m-2-2)
(m-3-2) edge node[below right]{$\varphi^{(3)}_{1,2,2}$} (m-2-3)
(m-3-2) edge node[below]{$\varphi^{(2)}_{1,2,2}$} (m-3-1)
(m-2-1) edge[-,dotted]  (m-2-2)
(m-1-3) edge[-,dotted]  (m-2-2)
(m-3-2) edge[-,dotted]  (m-2-2)
(m-1-7) edge node[above]{$\varphi^{(2)}_{2,2,1}$} (m-1-6)
(m-1-6) edge node[descr]{$\varphi^{(1)}_{2,1,1}$} (m-2-6)
(m-2-5) edge node[above left]{$\varphi^{(3)}_{2,1,2}$} (m-1-6)
(m-1-7) edge node[right]{$\varphi^{(1)}_{2,2,1}$} (m-2-7)
(m-2-7) edge node[descr]{$\varphi^{(2)}_{1,2,1}$} (m-2-6)
(m-2-5) edge node[left]{$\varphi^{(1)}_{2,1,2}$} (m-3-5)
(m-3-5) edge node[descr]{$\varphi^{(3)}_{1,1,2}$} (m-2-6)
(m-3-6) edge node[below right]{$\varphi^{(3)}_{1,2,2}$} (m-2-7)
(m-3-6) edge node[below]{$\varphi^{(2)}_{1,2,2}$} (m-3-5)
(m-2-5) edge[-,dotted]  (m-2-6)
(m-1-7) edge[-,dotted]  (m-2-6)
(m-3-6) edge[-,dotted]  (m-2-6)
;\end{tikzpicture}\end{center}

The tri-graded staircase algebra $\A(\Lambda)$  is defined by
\[\A(\Lambda)= K\Q(\Lambda)/(
\varphi^{(2)}_{1,2,1}\varphi^{(1)}_{2,2,1} - \varphi^{(1)}_{2,1,1}\varphi^{(2)}_{2,2,1}, \varphi^{(3)}_{1,1,2}\varphi^{(1)}_{2,1,2}-\varphi^{(1)}_{2,1,1}\varphi^{(3)}_{2,1,2}, \varphi^{(3)}_{1,1,2}\varphi^{(2)}_{1,2,2}-\varphi^{(2)}_{1,2,1}\varphi^{(3)}_{1,2,2}) .\]
\end{example}

Each multi-graded nilpotent tuple  together with a multi-graded vector space $V$ of shape $\Lambda$ as in \ref{ssect:multigrad} can be considered as a representation $M:=M(\varphi_1, ...,\varphi_k,V)$ of $\A(\Lambda)$ in a natural way. We denote $\underline{\dim}V:= \underline{\dim}_{\A(\Lambda)}M$ which depends on the grading on $V$, but not on the multi-graded nilpotent tuple.

\begin{example}
Consider the setup of Example \ref{ex:multi_grad}. Then, for example, the following is an $\A(\Lambda)$-representation $M(\varphi^{(1)},\varphi^{(2)},\varphi^{(3)}, V)$:
\begin{center}\small\begin{tikzpicture}
[descr/.style={fill=white}]
\matrix (m) [matrix of math nodes, row sep=2.81em,
column sep=3.8em, text height=1.2ex, text depth=0.3ex]
{          & K^2 & K^3  \\
 K^2 & K & K^2  \\
K^2 & K &            \\
 };
\path[->]
(m-1-3) edge node[above]{$\left(\begin{array}{c}
1 0 0\\
0 1 0
\end{array}\right)$} (m-1-2)
(m-1-2) edge node[descr]{$(1,0)$} (m-2-2)
(m-2-1) edge node[above left]{$id$} (m-1-2)
(m-1-3) edge node[right]{$\left(\begin{array}{c}
1 0 1\\
1 0 0
\end{array}\right)$} (m-2-3)
(m-2-3) edge node[descr]{$(0,1)$} (m-2-2)
(m-2-1) edge node[left]{$id$} (m-3-1)
(m-3-1) edge node[descr]{$(1,0)$} (m-2-2)
(m-3-2) edge node[below right]{$(0,1)^t$} (m-2-3)
(m-3-2) edge node[below]{$(1,0)^t$} (m-3-1)
(m-2-1) edge[-,dotted]  (m-2-2)
(m-1-3) edge[-,dotted]  (m-2-2)
(m-3-2) edge[-,dotted]  (m-2-2)
;\end{tikzpicture}\end{center} 
\end{example}

 Representation-theoretically speaking, the multi-graded nilpotent tuples of $V$ are encoded (up to base change in the homogeneous coordinates, that is,  base change by a Levi subgroup) in the representation variety $R_{\underline{\dim}V}\A(\sh(V))$.  Therefore, certain criteria can be translated from the Representation Theory of finite-dimensional algebras straight away. 
\begin{theorem}\label{thm:crit_multi_grad_pairs}
Let $\Lambda$ be a generalized Young diagram. 
Modulo Levi-base change, there are only finitely many $\Lambda$-graded nilpotent tuples  if and only if  $\A(\Lambda)$ is of finite representation type.
Otherwise, there are at most one-parameter families of non-decomposable $\Lambda$-graded nilpotent tuples if and only if $\A(\Lambda)$ is tame.
\end{theorem}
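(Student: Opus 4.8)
The plan is to promote the translation sketched above into a bijection that is compatible with the relevant group actions, and then to read off both equivalences from the finite-dimensional representation theory recalled in Section~\ref{ssect:theory}. First I would make the assignment $(\varphi_1,\dots,\varphi_k,V)\mapsto M(\varphi_1,\dots,\varphi_k,V)$ precise and check that it is a bijection onto the point set of $R_{\dimv V}\A(\Lambda)$: the representation $M$ assigns to the vertex $\underline{x}$ the space $V_{\underline{x}}$ and to the arrow $\varphi^{(i)}_{\underline{x}}$ the restriction $\varphi_i(\underline{x})\colon V_{\underline{x}}\to V_{x_1,\dots,x_i-1,\dots,x_k}$. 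The commutativity relations generating $I(\Lambda)$ are precisely the conditions $[\varphi_i,\varphi_j]=0$ read off on each homogeneous square, so $M$ is bound by $I(\Lambda)$ if and only if the tuple commutes; and since $\Q(\Lambda)$ is acyclic (every arrow strictly decreases $\sum_i x_i$, which is bounded below), each $\varphi_i$ is automatically nilpotent, so nilpotency imposes no extra condition. This gives the desired bijection between $\Lambda$-graded nilpotent tuples on $V$ and the representations in $\rep\A(\Lambda)(\dimv V)$.

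Second I would record that this bijection is equivariant: the Levi $L=\prod_{\underline{x}}\GL(V_{\underline{x}})$ acting by base change in the homogeneous components is exactly the group $\GL_{\dimv V}=\prod_{i\in\Q_0}\GL_{d_i}$ acting on $R_{\dimv V}\A(\Lambda)$ by base change, and the two actions agree under the bijection. Hence the $\Lambda$-graded nilpotent tuples on $V$ modulo Levi-base change are in bijection with the $\GL_{\dimv V}$-orbits in $R_{\dimv V}\A(\Lambda)$, that is, with the isomorphism classes of $\A(\Lambda)$-representations of dimension vector $\dimv V$.

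Third, for the finiteness statement I would argue as follows. If $\A(\Lambda)$ is of finite representation type, then for any dimension vector $\df$ every representation is a direct sum of the finitely many indecomposables with multiplicities bounded in terms of $\df$, so $R_{\df}\A(\Lambda)$ has only finitely many orbits; taking $\df=\dimv V$ yields finitely many tuples for each $V$ of shape $\Lambda$. Conversely, if $\A(\Lambda)$ is of infinite representation type, then by Drozd's dichotomy \cite{Dr} it is tame or wild, and in either case the classifying bimodules produce an infinite one-parameter family of pairwise non-isomorphic indecomposables of some fixed dimension vector $\df_0$; enlarging $\df_0$ to full support on $\Lambda$ by adding a fixed full-support summand still leaves infinitely many orbits, so some $V$ of shape $\Lambda$ carries infinitely many tuples. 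This proves the first equivalence. The tame statement is then immediate from the definitions: under the equivariant bijection, indecomposable $\A(\Lambda)$-modules correspond exactly to indecomposable $\Lambda$-graded nilpotent tuples, so the one-parameter families witnessing tameness of $\A(\Lambda)$ transport to at most one-parameter families of indecomposable tuples, while the wild alternative yields families of arbitrarily many parameters; by dichotomy, ``at most one-parameter families of indecomposable tuples'' holds precisely when $\A(\Lambda)$ is tame.

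The genuinely delicate point — and the step I expect to be the main obstacle — is the converse finiteness direction, where one must exhibit a \emph{single} vector space $V$ of the prescribed shape carrying infinitely many orbits rather than merely some abstract dimension vector. This is exactly where the dichotomy (rather than a naive orbit count) and the padding to full support are needed, and where one must verify that enlarging the dimension vector to full support does not collapse the infinite family. Everything else is a faithful transcription of the definitions, so the substance of the argument is concentrated in setting up the equivariant bijection cleanly and in this last reduction.
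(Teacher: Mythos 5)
Your first two steps (the bijection between graded tuples and bound representations, with nilpotency automatic from acyclicity of $\Q(\Lambda)$, and the identification of Levi base change with the $\GL_{\dimv V}$-action) correctly reconstruct the translation that the paper takes for granted: the paper states this theorem without proof, immediately after remarking that tuples are ``encoded in the representation variety $R_{\dimv V}\A(\sh(V))$'', and your dictionary is exactly the content of Lemma \ref{lem:grad_fin_dim}. The finite-type direction and the transport of the tame/wild alternative in the second equivalence are also fine at the paper's level of rigor, as is your padding-to-full-support argument (Krull--Remak--Schmidt gives the needed cancellation).

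The genuine gap is in the converse finiteness direction, at precisely the step you flagged: in the tame case, the ``classifying bimodules'' do \emph{not} produce an infinite family of pairwise non-isomorphic indecomposables of a fixed dimension vector. The definition of tameness recalled in Section \ref{ssect:theory} is an upper-bound statement: all but finitely many indecomposables of each dimension are specializations of finitely many bimodules. It does not assert that these specializations are indecomposable or pairwise non-isomorphic, nor that any single dimension carries infinitely many indecomposables; an algebra of infinite type with only finitely many indecomposables in every dimension would be consistent with that definition, and ruling this out is the second Brauer--Thrall theorem, which you would need to cite explicitly (it is valid over algebraically closed fields) but do not. Alternatively, and more in the spirit of the paper's own toolkit: $\A(\Lambda)$ is strongly simply connected and admits a preprojective component by Proposition \ref{prop:properties}, so by Bongartz's criterion (Theorem \ref{thm:crit_finite}) infinite representation type forces $q_{\Lambda}$ to fail weak positivity, hence $\A(\Lambda)$ contains a critical, i.e.\ tame concealed, convex subcategory; such an algebra has a one-parameter family of indecomposables whose dimension vector is its minimal nullroot (this is exactly the fact from \cite{HaVo} invoked in Lemma \ref{lem:3_fixed_vsp_tc}). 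Extending these by zero to $\A(\Lambda)$-modules and then applying your padding argument produces a single $V$ of shape $\Lambda$ with infinitely many orbits. Your wild case, by contrast, is sound as written: the images of the one-dimensional $K\langle X,Y\rangle$-modules under $\_\otimes_{K\langle X,Y\rangle}M$ are pairwise non-isomorphic indecomposables sharing one dimension vector.
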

This criterion shows that our Main Theorem is a direct consequence of our Classification Theorem and motivates the classification of representation types of multi-graded staircase algebras. 

\begin{lemma}\label{lem:grad_fin_dim}
The number of multi-graded nilpotent tuples (up to Levi-base change) of the fixed multi-graded vector space $V$ is finite if and only if $R_{\underline{\dim}V}\A(\sh(V))$ admits only finitely many $\GL_{\underline{\dim}V}$-orbits. In particular, this is the case if and only if the number of isomorphism classes of representations in $\rep\A(\sh(V))(\underline{\dim} V)$ is finite.
\end{lemma}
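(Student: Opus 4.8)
The plan is to exhibit an explicit bijection between the set of $\Lambda$-graded nilpotent tuples of $V$ (for $\Lambda=\sh(V)$) and the points of the representation variety $R_{\dimv V}\A(\Lambda)$, and then to check that this bijection intertwines the Levi-base-change action with the $\GL_{\dimv V}$-action. Once that is in place, all three finiteness statements follow simply by comparing orbit sets.

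First I would fix, for each box $\underline{x}$ of $\Lambda$, an identification $V_{\underline{x}}\cong K^{d_{\underline{x}}}$ with $d_{\underline{x}}=\dim_K V_{\underline{x}}$, so that the vertex set of $\Q(\Lambda)$ is exactly the set of boxes and $\dimv V=(d_{\underline{x}})_{\underline{x}}$. Given a $\Lambda$-graded nilpotent tuple $(\varphi_1,\dots,\varphi_k)$, each $\varphi_i$ splits into its graded components $\varphi_i(\underline{x})\colon V_{\underline{x}}\to V_{\underline{x}-e_i}$, and these are precisely the linear maps $M_\alpha$ attached to the arrows $\alpha=\varphi^{(i)}_{\underline{x}}$ of $\Q(\Lambda)$. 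Conversely, any choice of such arrow-maps assembles into graded operators $\varphi_i$ on $V$. This already gives a bijection between tuples of graded operators and points of $R_{\dimv V}K\Q(\Lambda)$, and it remains to match the defining conditions on the two sides.

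The key point is that each defining condition translates exactly. Nilpotency of $\varphi_i$ is automatic: since $\varphi_i$ lowers the $i$-th grading by one and the $i$-th coordinate takes only finitely many values on $V$, a fixed power of $\varphi_i$ annihilates $V$. Respecting the grading is built into the component decomposition. For the commutator conditions, restricting $[\varphi_i,\varphi_j]=0$ to a component $V_{\underline{x}}$ compares the two compositions $V_{\underline{x}}\to V_{\underline{x}-e_i-e_j}$ and yields precisely the commutativity relation of the square at $\underline{x}$ in directions $i,j$, while the diagonal case $i=j$ is vacuous. The one step requiring a little care is the matching of the ``if defined'' squares: when $\underline{x}\in\Lambda$ the boxes $\underline{x}-e_i$, $\underline{x}-e_j$, $\underline{x}-e_i-e_j$ also lie in $\Lambda$ whenever their coordinates stay $\geq 1$ (as $\Lambda$ is down-closed), so the relation is genuine; and whenever a corner drops out of $\mathbf{Z}_{\geq 1}^k$ the associated space is zero and both compositions vanish, so the condition is automatically satisfied. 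Hence the commuting tuples are exactly the representations bound by $I(\Lambda)$, that is, the points of $R_{\dimv V}\A(\Lambda)$.

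Finally I would observe that base change in the homogeneous components $V_{\underline{x}}$ --- the Levi action --- is, under the identification above, exactly the base change by $\GL_{\dimv V}=\prod_{\underline{x}}\GL_{d_{\underline{x}}}$ on $R_{\dimv V}\A(\Lambda)$, and that the bijection is equivariant for these actions (a change in the chosen identifications $V_{\underline{x}}\cong K^{d_{\underline{x}}}$ is absorbed into the group action, so the induced map on orbits is canonical). Consequently the bijection descends to a bijection of orbit sets, so the number of $\Lambda$-graded nilpotent tuples up to Levi-base change is finite if and only if $R_{\dimv V}\A(\sh(V))$ admits only finitely many $\GL_{\dimv V}$-orbits. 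The ``in particular'' clause is then immediate from the standard correspondence recalled in Section \ref{ssect:theory}, whereby the $\GL_{\df}$-orbits on $R_{\df}\A$ are in bijection with the isomorphism classes in $\rep\A(\df)$; finiteness of the orbit set is therefore equivalent to finiteness of the number of isomorphism classes in $\rep\A(\sh(V))(\dimv V)$. The whole argument is a matter of unwinding the translation, so I expect no serious obstacle beyond the bookkeeping of the degenerate squares noted above.
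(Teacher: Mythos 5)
Your proposal is correct and matches the paper's (implicit) argument: the paper states this lemma without a separate proof, treating it as an immediate consequence of the translation of Subsection \ref{ssect:translation}, by which graded nilpotent tuples of $V$ correspond to points of $R_{\underline{\dim}V}\A(\sh(V))$ equivariantly for Levi-base change versus $\GL_{\underline{\dim}V}$, and orbits correspond to isomorphism classes. You have simply unwound that translation in full detail (including the degenerate squares and automatic nilpotency), which is exactly the intended reasoning.
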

We, thus, study multi-graded staircase algebras to prove our Classification Theorem which directly yields our Main Theorem.

\subsection{General properties}\label{ssect:properties}
From now on, let $\Lambda$ be a $k$-dimensional Young diagram for some integer $k$. For $(i_1,...,i_k)\in \Q(\Lambda)_0$, let $S(i_1,....,i_k)$ be the standard simple representation at the vertex $(i_1,...,i_k)$ of $\A(\Lambda)$. 
The (isomorphism classes of the) projective indecomposables of $\A(\Lambda)$ are parametrized by $P(i_1,...,i_k)$, $(i_1,...,i_k)\in \Q(\Lambda)_0$, which are given by
 \[P(i_1,...,i_k)_{l_1,...,l_k}=\left\lbrace
\begin{array}{ll}
K & {\rm if}~ l_j\leq i_j ~\forall j, \\ 
0 & {\rm otherwise}.
                                                               \end{array}
 \right. \]
together with identity and zero maps, accordingly.
\begin{proposition}\label{prop:properties}
The algebra $\A(\Lambda)$ is triangular, fulfills the separation condition and is strongly simply connected. 
\end{proposition}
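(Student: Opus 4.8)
The plan is to verify the three asserted properties in order, using the explicit combinatorial structure of the quiver $\Q(\Lambda)$ and the fact that the relations of $\A(\Lambda)$ are exactly the commutativity relations on the squares of $\Lambda$. By Proposition~\ref{prop:properties}'s own logic (and the equivalence from \cite{Sko2} quoted earlier), the third claim will follow from the first two together with a check of convex subcategories, so the real work is in triangularity and the separation condition.

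\textbf{Triangularity.} First I would observe that every arrow $(x_1,\dots,x_k)\to(y_1,\dots,y_k)$ of $\Q(\Lambda)$ strictly decreases the coordinate sum by exactly one, since $y_i=x_i-1$ in a single coordinate and $y_j=x_j$ elsewhere. Hence the total-degree function $(x_1,\dots,x_k)\mapsto x_1+\cdots+x_k$ strictly decreases along every arrow, so no oriented cycle can exist. This makes $\A(\Lambda)$ triangular, which also justifies the use of the equivalent description of strong simple connectedness for triangular algebras.

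\textbf{Separation condition.} This is the step I expect to be the main obstacle, since it requires controlling the radical of each projective $P(i_1,\dots,i_k)$ and the shape of the deletion subquiver. Fix a vertex $v=(i_1,\dots,i_k)$. The arrows \emph{into} $v$ come from the neighbours $v+e_j$ that lie in $\Lambda$ (one per direction $j$), so the radical of $P(v)$ is built from the projectives $P(v+e_j)$; I would identify the indecomposable direct summands of $\rad P(v)$ with these neighbour-projectives, noting that the commutativity relations force the overlaps to be identified rather than to split off spuriously. Next I would describe the subquiver $\Q'$ obtained by deleting all starting points of paths ending in $v$: every vertex $w$ from which a path reaches $v$ satisfies $w_j\ge i_j$ for all $j$ (componentwise domination, since each arrow only lowers one coordinate by one and the box poset of $\Lambda$ is the componentwise order), so the deleted set is exactly $\{w\in\Lambda : w\ge v \text{ componentwise}\}$. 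The two distinct radical summands $P(v+e_j)$ and $P(v+e_\ell)$ with $j\ne\ell$ have supports $\{w\le v+e_j\}$ and $\{w\le v+e_\ell\}$ inside $\Q'$; I would argue these supports lie in different connected components of $\Q'$ because any vertex common to both—or any connecting path between them—would have to pass through a box $\ge v$ in the coordinate where it was already decreased, contradicting that $v+e_j$ and $v+e_\ell$ differ from $v$ in disjoint directions. This separation of radical summands into distinct components of the deletion subquiver is precisely the separation condition, and it must hold at every vertex $v$.

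\textbf{Strong simple connectedness.} Finally, by the criterion of Skowro\'nski quoted in the excerpt (\cite{Sko2}), for a triangular algebra it suffices to check that \emph{every} convex subcategory satisfies the separation condition. The key point is that any convex (path-closed) subquiver $\Q''$ of $\Q(\Lambda)$ is itself of the form $\Q(\Lambda')$ for a suitable generalized Young diagram $\Lambda'$, or at least inherits the same local square-commutativity structure, so the separation argument above applies verbatim to $\Q''$ with the same componentwise-domination reasoning. I would make this precise by noting that convexity means $\Q''$ is closed under the box order restricted to its vertices, which is exactly the structure the separation argument used; since nothing in that argument relied on $\Lambda$ being the full diagram, it transfers directly. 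Combining triangularity with the universal separation condition then yields strong simple connectedness, completing the proof.
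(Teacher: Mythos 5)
Your triangularity argument (the coordinate sum strictly decreases along every arrow) is correct and is exactly what underlies the paper's first sentence. The genuine problem is in your separation-condition step: you misidentify the radical of $P(v)$. Since $\rad P(v)$ is a submodule of $P(v)$, its support is contained in $\{w\in\Lambda : w\le v\}$; the projectives $P(v+e_j)$ at the in-neighbours contain $v+e_j$ in their support, so they cannot occur as direct summands of $\rad P(v)$. The radical is in fact generated by the arrows \emph{leaving} $v$, i.e.\ it is the sum of the images of the maps $P(v-e_j)\to P(v)$. Moreover, even after correcting the direction, $\rad P(v)$ is \emph{not} the direct sum of these: whenever $v-e_j$ and $v-e_\ell$ lie in $\Lambda$, so does $v-e_j-e_\ell$ (down-closedness of $\Lambda$), and the commutativity relation identifies the two images on the whole down-set of $v-e_j-e_\ell$. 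Consequently $\rad P(v)$ is the thin module supported on $\{w\le v,\ w\neq v\}$, a set that is connected (any two boxes are joined through their componentwise minimum), hence $\rad P(v)$ is indecomposable or zero. Note that your claimed conclusion is also false on its face: the images of $P(v-e_j)$ and $P(v-e_\ell)$ share every box $w\le v-e_j-e_\ell$, in particular $(1,\dots,1)$, and none of these boxes is deleted, so these supports lie in the \emph{same} component of the deletion quiver. If the radical really decomposed the way you assert, the separation condition would \emph{fail}; it holds for the opposite reason, namely that $\rad P(v)$ has a single indecomposable summand, so there is nothing to separate. This is precisely the content of the paper's one-line argument that all indecomposable projectives are thin.

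For strong simple connectedness you invoke the same criterion of Skowro\'nski as the paper, but your supporting claim that every convex subquiver is again of the form $\Q(\Lambda')$ is false: in $\Lambda=(2,2)$ the full subcategory on the three boxes $(1,2),(2,1),(2,2)$ is convex, yet its vertex set is not down-closed. This example also shows why convex subcategories need separate attention: there $\rad P(2,2)\cong S(1,2)\oplus S(2,1)$ genuinely decomposes, so the separation condition is no longer vacuous; it holds because the box $(1,1)$ that would glue or connect the two summands is absent from the subcategory, hence also absent from the deletion quiver. The paper's route is to observe that the projectives of any convex subcategory are again thin (by path-closedness they are supported on $\{w\in\Q'_0 : w\le v\}$) and to run the same reasoning there. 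So your overall architecture --- triangularity, separation for all convex subcategories, then the criterion of \cite{Sko2} --- matches the paper's, but the core separation argument must be replaced by the thinness/indecomposability argument above.
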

\begin{proof} Since $\Q(\Lambda)$ does not contain oriented cycles, $\A(\Lambda)$ is triangular.

The vector spaces $P(i_1,...,i_k)_{l_1,...,l_k}$ are one-dimensional for every $(l_1,...,l_k)\in\Q(\Lambda)_0$, thus, $\A(\Lambda)$ fulfills the separation condition. Since the latter is the case for every convex subcategory of $\A(\Lambda)$, the algebra is strongly simply connected. 
\end{proof}
These properties can directly  be translated and restricted to convex subcategories. We obtain induced simples $S(i_1,...,i_k)$ and projective indecomposables $P(i_1,...,i_k)$ for every vertex. Also, for the same reasons as in the proof of Proposition \ref{prop:properties}, every convex subcategory is triangular, fulfills the separation condition and is strongly simply connected.

\subsection{Reductions}\label{ssect:reductions}
The following reductions will help to examine the representation types of all multi-graded staircase algebra.
\begin{lemma}\label{lem:reduct_subcat}
 Let $\A$ be a convex subcategory of $\A'$. Then
 \begin{enumerate}
\item ... if $\A$ is tame, then $\A'$ is tame or wild.
 \item ... if $\A$ is wild, then $\A'$ is wild.
\item ... if $\A'$ has finite representation type, $\A$ has finite representation type.
\item ... if $\A'$ is tame, then $\A$ is tame or of finite representation type.
\end{enumerate}
 In particular, if $\Lambda \subseteq \Lambda'$ are two generalized Young diagrams, then these facts hold true for $\A=\A(\Lambda)$ and $\A'=\A(\Lambda')$.
\end{lemma}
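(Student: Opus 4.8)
The plan is to prove each of the four implications by producing, from a given representation of one algebra, a corresponding representation of the other, exploiting the fact that a convex subcategory $\A$ of $\A'$ sits inside $\A'$ via a full and exact embedding of module categories. The key structural input is that $\A = K\Q'/I'$ where $\Q'$ is a pathclosed subquiver of $\Q'$'s ambient quiver $\Q$ and $I' = \langle I(i,j) \mid i,j \in \Q'_0\rangle$; pathclosedness guarantees that no paths of $\A'$ connecting two vertices of $\Q'$ are lost, so the restriction and extension-by-zero functors behave well.

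First I would set up the two functors. The \emph{restriction} functor $\rho\colon \rep\A' \to \rep\A$ sends a representation $M' = ((M'_i),(M'_\alpha))$ to its restriction to the vertices and arrows of $\Q'$; because $\Q'$ is pathclosed and $I'$ is generated exactly by the relations of $I$ supported on $\Q'_0$, the restricted data still satisfies the relations of $\A$, so $\rho$ is well defined, $K$-linear and exact. Conversely, the \emph{extension by zero} functor $\iota\colon \rep\A \to \rep\A'$ places a representation of $\A$ on the subquiver $\Q'$ and sets all remaining vector spaces and maps to zero; here convexity (pathclosedness) is exactly what is needed to verify that the relations of $I$ involving vertices outside $\Q'$ are automatically satisfied, since any path passing through a deleted vertex acts as zero. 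The composite $\rho\circ\iota$ is naturally isomorphic to the identity on $\rep\A$, and $\iota$ is a full, faithful, exact embedding that preserves and reflects indecomposability and isomorphism classes. This last point is the technical heart: I would check that $\iota M \cong \iota N$ in $\rep\A'$ forces $M \cong N$ in $\rep\A$ (apply $\rho$), and that $\iota M$ is indecomposable whenever $M$ is.

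With the embedding in hand, the four statements follow from their definitions. For (3), if $\A'$ has finite representation type then there are finitely many indecomposables of $\A'$; since $\iota$ carries the (infinitely many, if any) indecomposables of $\A$ injectively to pairwise non-isomorphic indecomposables of $\A'$, finiteness of $\A$ follows. For (2), a wild algebra $\A$ admits a representation embedding of $\rep K\langle X,Y\rangle$; composing with $\iota$ yields such an embedding into $\rep\A'$, so $\A'$ is wild. For (1), tameness of $\A$ combined with Drozd's dichotomy (stated in the excerpt) leaves only tame or wild for $\A'$, since $\A$ being a full subcategory prevents $\A'$ from being representation-finite by (3) applied contrapositively — more precisely, if $\A$ is already of infinite type then so is $\A'$. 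For (4), tameness of $\A'$ means $\A'$ is not wild; by the contrapositive of (2), $\A$ cannot be wild, so by dichotomy $\A$ is tame or of finite type. The final sentence, specializing to $\Lambda \subseteq \Lambda'$, is immediate once one observes that the inclusion of generalized Young diagrams induces a pathclosed inclusion $\Q(\Lambda) \hookrightarrow \Q(\Lambda')$ with the induced ideal being exactly $I(\Lambda)$, which is precisely the convex-subcategory condition; this uses the downward-closed (partition) shape of $\Lambda$ inside $\Lambda'$.

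The main obstacle I anticipate is the careful verification that $\iota$ really lands in $\rep\A'$, i.e.\ that extension by zero respects all the commutativity relations of $I(\Lambda')$, including those squares that straddle the boundary between $\Q'$ and the deleted vertices; this is where pathclosedness must be used decisively, and a sloppy argument could overlook a relation in which one composite is supported inside $\Q'$ while the parallel composite passes through a deleted vertex. One must confirm that in such a boundary square the inside composite is also forced to vanish, which follows because the relation itself lies in $I(\Lambda)$ only when both endpoints are in $\Q'_0$ — and when they are, pathclosedness ensures the entire square lies in $\Q'$. Handling this boundary bookkeeping correctly, rather than the formal category-theoretic machinery, is the genuinely delicate step; the rest is a routine translation of the definitions of the four representation types through an exact full embedding.
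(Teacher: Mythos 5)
Your proposal is correct and follows exactly the paper's (very terse) argument: the paper's proof consists of the single remark that the claims follow ``by restricting in a natural way or expanding with zeros,'' citing \cite{ASS}, which is precisely the restriction functor $\rho$ and extension-by-zero functor $\iota$ you construct and analyze in detail. Your careful treatment of the boundary squares via pathclosedness, and of the downward-closedness of $\Lambda\subseteq\Lambda'$ for the final claim, fills in the details the paper leaves implicit, so no further changes are needed.
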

\begin{proof}
 The claim follows from general Representation Theory of quivers with relations \cite{ASS} by restricting in a natural way or expanding with zeros. 
\end{proof}

\begin{definition}\label{def:correlation}
We say that two algebras $\A=K\Q/I$ and $\A'=K\Q'/I'$ correlate, if the quivers $\Q$ and $\Q'$ coincide modulo turning around of arrows and if the ideals coincide in the same way. Two correlating algebras are denoted by $\A\sim\A'$. In particular, if $\A(\Lambda)\sim\A(\Lambda')$, we denote $\Lambda\sim \Lambda'$. 
\end{definition}
\begin{example}\label{ex:correlation}
The algebras of the quivers
\[\begin{tikzpicture}
\matrix (m) [matrix of math nodes, row sep=0.8em,
column sep=0.8em, text height=0.2ex, text depth=0.1ex]
{&& &  \bullet \\
  & \bullet & \bullet & \bullet \\
 \bullet &\bullet & \bullet &\\
 };
\path[->]
(m-2-2) edge  (m-2-3)
(m-2-3) edge  (m-2-4)
(m-2-2) edge (m-3-2)
(m-2-3) edge (m-3-3)
(m-1-4) edge (m-2-4)
(m-3-1) edge (m-3-2)
(m-3-2) edge  (m-3-3);
\path[-]
(m-2-2) edge[dotted]  (m-3-3);
\end{tikzpicture} ~~~and~~~ \begin{tikzpicture}
\matrix (m) [matrix of math nodes, row sep=0.8em,
column sep=0.8em, text height=0.2ex, text depth=0.1ex]
{&& \bullet\\
& &  \bullet \\
  & \bullet & \bullet  \\
 \bullet &\bullet & \bullet \\
 };
\path[->]
(m-1-3) edge (m-2-3)
(m-2-3) edge  (m-3-3)
(m-3-2) edge  (m-3-3)
(m-3-2) edge (m-4-2)
(m-3-3) edge (m-4-3)
(m-4-1) edge (m-4-2)
(m-3-2) edge  (m-3-3)
(m-4-2) edge  (m-4-3);
\path[-]
(m-3-2) edge[dotted]  (m-4-3);
\end{tikzpicture}\]
correlate. 
\end{example}

\begin{lemma}\label{lem:reduct_orientation}
If $\A$ and $\A'$ are multi-graded staircase algebras and $\A\sim \A'$, then the representation types of $\A$ and $\A'$ coincide.
\end{lemma}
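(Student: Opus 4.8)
The plan is to reduce the statement to an equality of Tits forms and then quote the criteria that, for this class of algebras, read the representation type off the Tits form alone. By Proposition~\ref{prop:properties} both $\A$ and $\A'$ are triangular, strongly simply connected, and---via the separation condition---admit a preprojective component. Consequently Theorem~\ref{thm:crit_finite} characterizes finite type by weak positivity of the Tits form, Theorem~\ref{thm:crit_tame} characterizes tameness (in the infinite-type case) by weak non-negativity, and wildness is then forced by Drozd's dichotomy \cite{Dr}. So it suffices to prove $q_{\A}=q_{\A'}$, since this makes all three type-distinguishing conditions agree simultaneously.

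Next I would track how each summand of $q_{\A}(v)=\sum_{i}v_i^2-\sum_{\alpha\colon i\to j}v_iv_j+\sum_{i,j}r(i,j)v_iv_j$ behaves under the correlation. The form is assembled entirely from the monomials $v_i^2$ and $v_iv_j$, and these are insensitive to orientation because $v_iv_j=v_jv_i$. A correlation identifies the vertex sets of $\Q$ and $\Q'$, so the first sum is literally unchanged; it identifies the arrows up to reversal, and turning an arrow $i\to j$ into $j\to i$ preserves its monomial, so the arrow sum is unchanged as well.

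The delicate point, and the step I expect to be the main obstacle, is the relation term $\sum_{i,j}r(i,j)v_iv_j$. In a multi-staircase algebra the minimal relations are precisely the commutativity relations of the squares, and each contributes the monomial $v_av_b$, where $a$ and $b$ are the unique source and sink of the square. I must verify that a correlation preserves, square by square, the unordered endpoint pair $\{a,b\}$. The leverage is that $\A'$ is \emph{also} a genuine multi-staircase algebra: it is triangular and its relations are again commutativity relations. Reversing the arrows inside a single square in any way other than the identity or the total reversal either breaks the pattern of two length-two paths sharing a common source and sink---so the image is no longer a commutativity relation---or it produces an oriented cycle, contradicting triangularity. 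Hence the only admissible local reversals fix each pair $\{a,b\}$ (possibly swapping the roles of $a$ and $b$), so every monomial $v_av_b$, and thus the undirected count $r(i,j)$, is preserved. This yields $q_{\A}=q_{\A'}$ and, with the first paragraph, completes the proof.
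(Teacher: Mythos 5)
Your reduction in the first two paragraphs is sound and is exactly the paper's route: by Proposition~\ref{prop:properties} both algebras are triangular, satisfy the separation condition (hence admit preprojective components) and are strongly simply connected, so Theorem~\ref{thm:crit_finite}, Theorem~\ref{thm:crit_tame} and Drozd's dichotomy \cite{Dr} read the type (finite/tame/wild) off the Tits form; the paper compresses all of this into one sentence. Note, however, that literal equality $q_{\A}=q_{\A'}$ is more than you need: weak positivity and weak non-negativity only involve values on $\mathbf{N}^{\Q_0}$, so agreement of the two forms up to \emph{any} bijection of the vertex sets would already suffice. This remark matters, because your stronger claim is in fact false.

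The genuine gap is the local case analysis in your last paragraph. Take a commutativity square with source $a$, middle vertices $b,c$, sink $d$, and reverse the two \emph{opposite} arrows $a\to b$ and $c\to d$. The result has arrows $b\to a$, $a\to c$, $b\to d$, $d\to c$: this is again a commutativity square (two length-two paths $b\to a\to c$ and $b\to d\to c$ with common source and sink) and contains no oriented cycle, so it falls into neither of your two ``bad'' cases --- yet the endpoint pair of the relation has changed from $\{a,d\}$ to $\{b,c\}$. Thus your trichotomy (identity, total reversal, or broken/cyclic) is wrong, and the asserted identity $q_{\A}=q_{\A'}$ under the correlation's vertex identification fails: for $\Lambda$ the $2\times 2$ diagram this reversal is a correlation between two multi-staircase algebras whose Tits forms have relation terms $v_av_d$ and $v_bv_c$ respectively, taking the distinct values $3$ and $2$ on the vector with $v_a=v_d=1$, $v_b=v_c=0$. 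The lemma survives there only because the two algebras happen to be isomorphic via the relabeling $a\leftrightarrow b$, $c\leftrightarrow d$, i.e.\ the forms agree up to a vertex bijection. Ruling out such diagonal swaps (or absorbing them into a relabeling) is an inherently global question about which reversal patterns can connect two quivers that are \emph{both} of multi-staircase type --- for instance, in a $2\times 3$ grid the same opposite-pair reversal creates a vertex of out-degree $3$, which no staircase quiver on six vertices admits, so there it simply cannot occur --- and a purely square-by-square argument cannot deliver this. So your proof needs either this global analysis or a retreat to the weaker, permutation-invariant statement, which is essentially what the paper's (terse) proof implicitly relies on.
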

\begin{proof}
Since $\A$ and $\A'$ are strongly simply connected, we know that their representation types only depend on the quadratic form. It, thus, does not depend on the orientation of the corresponding quiver.
\end{proof}
In particular, we know that the representation type is invariant under certain symmetries.

 In the next sections, we classify all representation types of multi-graded staircase algebras. In more detail, in Section \ref{sect:bi_graded} we cite the classification of bi-graded staircase algebras of \cite{B4}, in Section \ref{sect:tri_graded} we classify the representation types of all tri-graded staircase algebras and in Section \ref{sect:high_grad}, we finish the proof of our Classification Theorem by classifying all higher-graded staircase algebras.
 
We define a \textit{proper} $k$-graded staircase algebra  to be a  $k$-graded staircase algebra $\A(\Lambda)$ which does not correlate to a $k-1$-graded staircase algebra. This is the case iff $\Lambda$ contains the minimal $k$-dimensional Young Diagram \[\Lambda_{\min}:=\{(x_1,...,x_k)\mid ~x_i= 2~{\rm for~at~most~one}~i\}\subset \{1,2\}^k.\]

\section{Bi-graded nilpotent pairs}\label{sect:bi_graded}
The "smallest" case of multi-graded nilpotent tuples is the case of graded nilpotent pairs, which correspond by Subsection \ref{ssect:translation} to bi-graded staircase algebras, which we only call \textit{staircase algebras} as in \cite{B4}.

A complete classification of representation types of staircase algebras is available in \cite{B4}. Since every staircase algebra is defined by its corresponding shape which is a Young diagram, that is, partition, it suffices to classify these partitions.
\begin{theorem}(\cite[Theorem 4.6]{B4})\label{thm:stair_rep_type}
A staircase algebra $\A(\lambda)$ is 
\begin{itemize}
\item[(a)] representation-finite if and only if one of the following holds true:
\begin{enumerate}
 \item\label{1stcase} $\lambda\in\{(n),~ (1^k,n-k),~ (2,n-2),~ (1^{n-4},2^2)\}$ for some $k\leq n$,
\item\label{2ndcase}  $n\leq 8$ and $\lambda\notin\{(1,3,4),~ (2,3^2),~ (1,2^2,3),~ (1^2,2,4)\}$.
\end{enumerate}
\item[(b)] tame concealed if and only if $\lambda$ comes up in the following list:\\ $(3,6)$, $(1,2,6)$, $(1,3,4)$, $(2^2,5)$, $(1^2,2,4)$, $(1,2^2,3)$, $(1^3,3^2)$, $(1^3,2^3)$, $(1^4,2,3)$.
\item[(c)] tame, but not tame concealed if and only if $\lambda$ comes up in the following list:\\ $(4,5)$, $(5^2)$, $(1,4^2)$, $(2,3^2)$, $(3^3)$, $(2^3,3)$, $(1,2^4)$, $(2^5)$.
\end{itemize} 
 Otherwise, $\A(\lambda)$ is of wild representation type. 
\end{theorem}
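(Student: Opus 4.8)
The plan is to reduce the classification entirely to the weak (non-)positivity of the Tits form $q_\lambda$ and then carry out a finite-plus-combinatorial analysis driven by the three structural theorems already available. Since Proposition~\ref{prop:properties} tells us that every staircase algebra (and every convex subcategory) is triangular, fulfils the separation condition and is strongly simply connected, it admits a preprojective component, so Theorem~\ref{thm:crit_finite} applies: $\A(\lambda)$ is representation-finite if and only if $q_\lambda$ is weakly positive. Likewise, by strong simple connectedness, Theorem~\ref{thm:crit_tame} gives that $\A(\lambda)$ of infinite type is tame if and only if $q_\lambda$ is weakly non-negative, equivalently if and only if $\A(\lambda)$ contains no hypercritical full convex subcategory; and Corollary~\ref{cor:crit_wild} converts any witness $v$ with $q_\lambda(v)\le -1$ into a proof of wildness. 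Thus the entire theorem is a statement purely about the quadratic form $q_\lambda$, computed from the quiver $\Q(\lambda)$ and the commutativity relations.

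First I would fix the combinatorial setup: a partition $\lambda$ determines the quiver $\Q(\lambda)$ whose vertices are the boxes of the Young diagram, with horizontal and vertical arrows, and the relations are exactly the commutativity relations of the inner squares. I would write down $q_\lambda$ explicitly, noting that the quadratic part is $\sum v_i^2 - \sum_{\text{arrows}} v_iv_j + \sum_{\text{squares}} v_iv_j$, where each inner square of the diagram contributes one relation and hence one $+v_iv_j$ term (here $i,j$ are the two diagonal corners joined by the relation). The monotonicity engine is Lemma~\ref{lem:reduct_subcat}: if $\lambda\subseteq\lambda'$ then $\A(\lambda)$ is a convex subcategory of $\A(\lambda')$, so finite type, tameness and wildness propagate in the expected directions; moreover Lemma~\ref{lem:reduct_orientation} lets me identify $\lambda$ with its transpose and with any reorientation. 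This reduces the classification to identifying (i) the minimal wild diagrams, i.e.\ those $\lambda$ whose algebra is hypercritical, and (ii) among the non-wild ones, the minimal tame (critical, i.e.\ tame concealed) diagrams, matching them against the Bongartz--Happel--Vossieck list.

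The concrete execution then splits by the number of parts and the part sizes. For the finite list in (a), I would verify weak positivity of $q_\lambda$ directly for the four infinite families $(n)$, $(1^k,n-k)$, $(2,n-2)$, $(1^{n-4},2^2)$, and handle the remaining small cases $n\le 8$ by exhibiting, for each of the four excluded diagrams $(1,3,4)$, $(2,3^2)$, $(1,2^2,3)$, $(1^2,2,4)$, an explicit vector showing $q_\lambda$ is not weakly positive, and conversely checking weak positivity for all other $\lambda$ with $n\le 8$. For (b) and (c) I would compute $q_\lambda$ on the listed diagrams: for each tame-concealed candidate I exhibit its minimal nullroot (a positive radical vector) and confirm by inspection that every proper convex subcategory is weakly positive, so the algebra is critical and I then match the underlying graph against the BHV list to name the type; for the tame-non-concealed diagrams in (c) I verify $q_\lambda$ is weakly non-negative but contains a tame-concealed (critical) convex subcategory, placing it in the tame-but-not-minimal class. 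The wildness of everything else follows by the monotonicity of Lemma~\ref{lem:reduct_subcat} once each non-listed diagram is shown to contain one of a finite set of minimal wild (hypercritical) convex subdiagrams, using the classification of hypercritical algebras and Corollary~\ref{cor:crit_wild}.

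The main obstacle I expect is the bookkeeping at the boundary between tame and wild: pinning down the \emph{exact} finite list of minimal wild staircase diagrams and verifying that no partition outside lists (a)--(c) escapes containing one of them. Weak positivity and weak non-negativity are each decidable by a finite search, but the argument must be organized so that a handful of explicit hypercritical subdiagrams dominate all remaining cases via $\lambda\subseteq\lambda'$; producing that irredundant dominating set—and checking that the listed tame-concealed diagrams are genuinely minimal (every proper restriction weakly positive) rather than merely weakly non-negative—is where the real care lies. The presence of the commutativity relations, which add the $+v_iv_j$ corner terms and thereby change $q_\lambda$ relative to the underlying quiver's Euler form, is exactly what makes this delicate and prevents a naive appeal to the Dynkin/Euclidean classification of the bare quiver.
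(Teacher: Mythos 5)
Your proposal is correct and follows essentially the same route as the paper: the statement is not proved here but imported from \cite{B4}, and both that proof and this paper's own analogous argument for the tri-graded case (Lemma \ref{lem:3flat_rep_type}) proceed exactly as you outline — reductions via Lemma \ref{lem:reduct_subcat} and Lemma \ref{lem:reduct_orientation}, Bongartz's criterion (Theorem \ref{thm:crit_finite}) for finite type, the Tits form and hypercritical-subcategory criteria (Theorem \ref{thm:crit_tame}, Corollary \ref{cor:crit_wild}) for the tame/wild dichotomy, and identification of the critical cases against the Bongartz--Happel--Vossieck list, with explicit Auslander--Reiten or nullroot computations for the borderline diagrams.
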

As in \cite{B4}, this yields the following finiteness criterion for graded nilpotent pairs.
\begin{theorem}(\cite[Theorem 6.1]{B4})\label{thm:gradedLambda}
The number of $\lambda$-graded nilpotent pairs (up to Levi-base change) is finite if and only if $\lambda$ appears in Theorem \ref{thm:stair_rep_type} (a).
\end{theorem}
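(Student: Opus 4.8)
The plan is to treat this statement as a direct corollary of the two results already established for the bi-graded situation, namely the translation criterion of Theorem \ref{thm:crit_multi_grad_pairs} and the representation-type classification of Theorem \ref{thm:stair_rep_type}. The only work is to assemble these and to verify that the bi-graded case ($k=2$, so that $\Lambda=\lambda$ is an ordinary partition) is precisely the hypothesis under which both results apply.

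First I would specialize Theorem \ref{thm:crit_multi_grad_pairs} to $k=2$: for a partition $\lambda$, the number of $\lambda$-graded nilpotent pairs, modulo Levi-base change, is finite if and only if the staircase algebra $\A(\lambda)$ is of finite representation type. Via the translation of Subsection \ref{ssect:translation}, each graded vector space $V$ of shape $\lambda$ together with a pair $(\varphi_1,\varphi_2)$ is encoded as a representation of $\A(\lambda)$ of dimension vector $\dimv V$, and Lemma \ref{lem:grad_fin_dim} identifies the Levi-orbits of pairs on a fixed $V$ with the isomorphism classes in $\rep\A(\lambda)(\dimv V)$. Thus finiteness of pairs corresponds to orbit-finiteness in the associated representation varieties.

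Second I would invoke Theorem \ref{thm:stair_rep_type}(a), which lists exactly those $\lambda$ for which $\A(\lambda)$ is representation-finite. Chaining the two equivalences then yields the claim: finitely many $\lambda$-graded nilpotent pairs (up to Levi-base change) $\iff$ $\A(\lambda)$ is representation-finite $\iff$ $\lambda$ appears in Theorem \ref{thm:stair_rep_type}(a). No new computation is required; the argument is purely the composition of two previously stated biconditionals.

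The genuine content sits entirely in the two cited theorems, so the only point deserving attention in the present argument is the infinite direction: one must be certain that when $\A(\lambda)$ fails to be representation-finite, the resulting infinitude of indecomposables already manifests among representations whose dimension vector is realized by some graded vector space of shape \emph{exactly} $\lambda$ (and not merely on a strictly smaller shape). Matching this combinatorial notion of shape with the algebraic dimension vector is exactly what Theorem \ref{thm:crit_multi_grad_pairs} packages; concretely, since the support of every indecomposable of $\A(\lambda)$ lies inside the quiver $\Q(\lambda)$, one may pad an infinite (one-parameter or larger) family by a fixed representation nonzero on the maximal boxes of $\lambda$, which keeps the family infinite while forcing the shape to be $\lambda$. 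Beyond this bookkeeping I expect no obstacle.
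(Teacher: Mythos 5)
Your proposal is correct and follows essentially the same route as the paper: the paper presents this theorem as a direct consequence of the translation criterion (Theorem \ref{thm:crit_multi_grad_pairs}, specialized to $k=2$) combined with the representation-type classification of staircase algebras in Theorem \ref{thm:stair_rep_type}(a), both results being imported from \cite{B4}. Your additional remark on padding infinite families so that the support realizes the shape exactly is precisely the bookkeeping that Theorem \ref{thm:crit_multi_grad_pairs} (via Lemma \ref{lem:grad_fin_dim}) is meant to absorb, so no gap remains.
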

Further criteria about the number of graded nilpotent pairs of fixed bi-graded vector spaces are available by \cite{B4}.
\section{Tri-graded nilpotent tuples}\label{sect:tri_graded}
We now discuss tri-graded nilpotent tuples which correspond to tri-graded staircase algebras via Subsection \ref{ssect:translation}.

A proper tri-graded staircase algebra $\A$ is called \textit{flat} if it correlates to a tri-graded staircase algebra $K\Q(\Lambda)/I$, where $\Lambda$ is a flat tri-graded Young diagram, that is, its quiver can be  embedded into the plane. In particular, as we have seen in Subsection \ref{ssect:gener_YD}, every flat tri-graded staircase algebra $\A$ is given by two decreasing partitions $\lambda=(\lambda_1,...,\lambda_k)$ and $\mu=(\mu_1,...,\mu_l)$, such that $\lambda_1=\mu_1=:x$ and such that the quiver of $\A$ is obtained by gluing the two quivers of the staircase algebras $\A(\lambda)$ and $\A(\mu)$ together at their common column of $x$ vertices. We denote $\A=:\A(\lambda,\mu)$ and $(\lambda,\mu)=(\lambda_k,...\lambda_2,\underline{x},\mu_2,...,\mu_l)$.  

We define a componentwise ordering $(\lambda,\mu)\leq_c (\lambda',\mu') $ on these double partitions by the condition that $\lambda\leq \lambda'$ is a subpartition and $\mu\leq \mu'$ is a subpartition (speaking in Young diagrams as in Subsection \ref{ssect:gener_YD}, they are subsets). If $(\lambda,\mu)\leq_c (\lambda',\mu')$, then the statements of Lemma \ref{lem:reduct_subcat} hold true for $\A(\lambda,\mu)$ which is a convex subcategory of $\A(\lambda',\mu')$.
\begin{example}\label{ex:flat}
For the flat $3$-dimensional Young diagram from Example \ref{ex:3_YD}, the partitions are given by $\lambda=(4,2,1)$ and $\mu=(4,3,3)$; and the algebra $\A(\lambda,\mu)=\A(1,2,\underline{4},3,3)$ is defined by the quiver
\begin{center}
\begin{tikzpicture}
\matrix (m) [matrix of math nodes, row sep=0.7em,
column sep=0.7em, text height=0.79ex, text depth=0.1ex]
{       &         & \bullet  &         &\\
        &         & \bullet  & \bullet & \bullet\\
        & \bullet & \bullet  & \bullet & \bullet\\
\bullet & \bullet & \bullet  & \bullet & \bullet\\
 };
\path[-]
(m-1-3) edge  (m-2-3)
(m-2-3) edge  (m-3-3)
(m-2-4) edge  (m-2-3)
(m-2-5) edge   (m-2-4)
(m-2-4) edge   (m-3-4)
(m-2-5) edge   (m-3-5)
(m-3-2) edge  (m-3-3)
(m-3-4) edge   (m-3-3)
(m-3-5) edge   (m-3-4)
(m-3-2) edge   (m-4-2)
(m-3-3) edge   (m-4-3)
(m-3-4) edge   (m-4-4)
(m-3-5) edge   (m-4-5)
(m-4-1) edge   (m-4-2)
(m-4-2) edge   (m-4-3)
(m-4-4) edge   (m-4-3)
(m-4-5) edge   (m-4-4);
\path[-]
(m-3-2) edge[dotted]    (m-4-3)
(m-2-4) edge[dotted]    (m-3-3)
(m-2-5) edge[dotted]    (m-3-4)
(m-3-4) edge[dotted]    (m-4-3)
(m-3-5) edge[dotted]    (m-4-4)
;\end{tikzpicture}
\end{center}
\end{example}
Our aim in the next two subsections is to prove the following classification of representation types of tri-graded staircase algebras.
\begin{theorem}\label{thm:tri_stair_rep_type}
A proper tri-graded staircase algebra $\A(\Lambda)$ is 
\begin{itemize}
\item[(a)] representation-finite if and only if $\A(\Lambda)=\A(\lambda,\mu)$ is flat and 
$(\lambda,\mu)\leq_c (\lambda',\mu')$ appears in Lemma \ref{lem:3flat_rep_type} (a),
\item[(b)] tame concealed if and only if $\A(\Lambda)=\A(\lambda,\mu)$ is flat and 
$(\lambda,\mu)$ appears in Lemma \ref{lem:3flat_rep_type} (b),
 
\item[(c)] tame, but not tame concealed if and only if
either $\Lambda$ equals 
\begin{center}
\begin{tikzpicture}[scale=0.5]
\planepartition{}{
    {2,2},
    {2,1}}
\end{tikzpicture}
\end{center}
or if $\A(\Lambda)=\A(\lambda,\mu)$ is flat and
$(\lambda,\mu)$ appears in Lemma \ref{lem:3flat_rep_type} (c).
\end{itemize} 
 Otherwise, $\A(\Lambda)$ is of wild representation type. 
\end{theorem}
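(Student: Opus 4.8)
The plan is to classify the representation types of all proper tri-graded staircase algebras $\A(\Lambda)$ by exploiting the strongly-simply-connected structure established in Proposition \ref{prop:properties} together with the Tits-form criteria of Theorem \ref{thm:crit_finite} and Theorem \ref{thm:crit_tame}. Since every $\A(\Lambda)$ is strongly simply connected (hence admits a preprojective component), the representation type is governed entirely by the quadratic form $q_{\Lambda}$: finite type is equivalent to weak positivity, tameness to weak non-negativity, and wildness to the existence of a vector $v\in\mathbf{N}^{\Q_0}$ with $q_{\Lambda}(v)\leq-1$ (Corollary \ref{cor:crit_wild}). First I would split the argument according to whether $\Lambda$ is flat or non-flat, since these require genuinely different treatments.

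For the flat case I would reduce to the combinatorics of double partitions $(\lambda,\mu)$. The key structural tool is Lemma \ref{lem:reduct_subcat}: the componentwise order $\leq_c$ realizes $\A(\lambda,\mu)$ as a convex subcategory of $\A(\lambda',\mu')$ whenever $(\lambda,\mu)\leq_c(\lambda',\mu')$, so wildness and finiteness propagate monotonically. This lets me organize the classification around \emph{minimal} wild diagrams and \emph{maximal} finite/tame diagrams. The plan is to identify, using Corollary \ref{cor:crit_wild}, an explicit finite list of minimal hypercritical (equivalently minimal wild, by Theorem \ref{thm:crit_tame} and the classification in \cite{Un}) flat double partitions: for each candidate I would exhibit a dimension vector $v$ with $q_{\Lambda}(v)=-1$. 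Everything strictly below this wild frontier is then tame or finite, and I would separate those two by locating the tame concealed (critical) cases via the Bongartz-Happel-Vossieck list, matching each to a double partition in Lemma \ref{lem:3flat_rep_type}(b); the remaining non-concealed tame cases of part (c) are those whose every proper convex subcategory is already below the critical frontier while the whole algebra is only weakly non-negative. The symmetry reductions of Lemma \ref{lem:reduct_orientation} cut the number of cases to check, since correlating algebras share representation type.

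For the non-flat case I would argue that, apart from the single exceptional diagram displayed in part (c), every non-flat proper tri-graded staircase algebra is wild. The natural approach is to show that any non-flat $\Lambda$ contains, as a convex subcategory, a minimal wild configuration; by Lemma \ref{lem:reduct_subcat}(ii) wildness then lifts to $\A(\Lambda)$. Concretely, I expect the genuine three-dimensional corner --- the subquiver supported on the eight vertices $(i,j,k)$ with $i,j,k\in\{1,2\}$ forming a $2\times2\times2$ cube with its three commutativity relations --- to itself be wild, so that any $\Lambda$ strictly exceeding the lone exceptional diagram already contains a wild subcategory. The exceptional diagram corresponding to the shape with two columns of heights $2,2$ and $2,1$ (the one appearing in Example \ref{ex:multi_grad}) must then be verified by direct computation of $q_{\Lambda}$ to be weakly non-negative but not weakly positive, with no hypercritical convex subcategory, placing it in the tame non-concealed class.

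The main obstacle I anticipate is the bookkeeping of the flat frontier: determining the precise minimal wild and maximal tame double partitions requires evaluating $q_{\Lambda}$ on carefully chosen dimension vectors for a moderately large but finite family of diagrams, and matching the tame concealed cases against the BHV-list entry by entry. This is where the argument is genuinely computational rather than structural, and I would defer the explicit enumeration to the auxiliary Lemma \ref{lem:3flat_rep_type}, whose statement packages exactly the lists of double partitions referenced in parts (a), (b) and (c). Once that lemma is in hand, Theorem \ref{thm:tri_stair_rep_type} follows by assembling the flat classification with the non-flat wildness result and the single exceptional tame diagram.
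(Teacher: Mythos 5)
Your overall architecture matches the paper's: the flat/non-flat split, the reduction of the flat case to the componentwise order $\leq_c$ via Lemma \ref{lem:reduct_subcat}, the symmetry reduction of Lemma \ref{lem:reduct_orientation}, the identification of tame concealed cases through the BHV-list, and the deferral of the explicit enumeration to Lemma \ref{lem:3flat_rep_type} are all exactly how the paper proceeds, so the flat half of your plan is fine.

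The genuine gap is in the non-flat half, where you propose the $2\times2\times2$ cube (all eight boxes $(i,j,k)$ with $i,j,k\in\{1,2\}$; note it carries six face relations, not three) as the universal wild witness. First, the coverage claim fails: a non-flat diagram strictly exceeding the exceptional seven-box diagram $\Lambda_0$ need not contain the box $(2,2,2)$ at all. For instance $\Lambda_0\cup\{(3,1,1)\}$ is non-flat and strictly larger than $\Lambda_0$, yet the cube is not a subdiagram of it, so your reduction produces no wild convex subcategory for precisely the diagrams that matter. The paper's witness is instead this eight-box configuration $\Lambda_0$ plus one box in a coordinate direction: for it the dimension vector taking the value $0$ at $(1,1,1)$, the value $2$ at the other six boxes of $\Lambda_0$ and $1$ at the added box has Tits form value $-1$, and (unlike the cube) this configuration is contained in every non-flat diagram properly containing $\Lambda_0$ except the cube itself.

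Second, even for the cube your proposed certificate cannot exist, because the cube's Tits form is weakly non-negative. Writing $a,d$ for the entries at $(1,1,1)$ and $(2,2,2)$, $b_i$ for the entries at the boxes with exactly one coordinate equal to $2$, $y_i$ for the entry at the box opposite to $b_i$, and $B=\sum_i b_i$, $C=\sum_i y_i$, one computes
\[
q \;=\; a^2+d^2+(d-a)(B-C)+\sum_{i}\bigl(b_i^2+y_i^2\bigr)-\sum_{i\neq j}b_iy_j
\;\geq\; 2ad+\Bigl(\tfrac{B-C}{2}+d-a\Bigr)^{2}\;\geq\;0
\]
on $\mathbf{N}^{8}$, using $b_i^2+y_i^2+b_iy_i\geq\tfrac34(b_i+y_i)^2$ and Cauchy--Schwarz. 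So Corollary \ref{cor:crit_wild} can never apply. Nor can you instead invoke Theorem \ref{thm:crit_tame} to place the cube in the tame class: the six middle boxes form a convex subcategory whose quiver is a non-oriented hexagon $\widetilde{A}_5$ carrying no relations, so the cube is \emph{not} strongly simply connected (the same remark shows Proposition \ref{prop:properties} must be treated with care for non-flat shapes). Consequently the cube's representation type is genuinely outside the reach of the Tits-form toolkit you set up --- its alleged wildness is in fact the one non-flat case that even the paper's own argument (via the eight-box witness, Lemma \ref{lem:3non-flat_rep_type}) does not cover --- and as written your plan establishes wildness for no non-flat diagram beyond $\Lambda_0$. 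To repair it you should replace the cube by the paper's coordinate-extension witness and treat the cube separately by methods not based on the Tits form (e.g.\ tubular extension/coextension or coil techniques over the hereditary $\widetilde{A}_5$ belt).
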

The explicit flat cases are listed in Appendix \ref{app:case_study} and we split up our examination into two cases: flat and non-flat tri-graded staircase algebras.

\subsection{Non-flat tri-gradings}\label{ssect:tri_non-flat}

We first discuss the representation types of non-flat tri-graded staircase algebras. Therefore, we define one particular algebra $\A(3)$ by the quiver with relations
\begin{center}\small\begin{tikzpicture}
\matrix (m) [matrix of math nodes, row sep=0.8em,
column sep=0.8em, text height=0.8ex, text depth=0.1ex]
{          & \bullet_b & \bullet_e  \\
 \bullet_g & \bullet_a & \bullet_c  \\
 \bullet_d & \bullet_f &            \\
 };
\path[->]
(m-1-3) edge  (m-1-2)
(m-1-2) edge  (m-2-2)
(m-2-1) edge  (m-1-2)
(m-1-3) edge  (m-2-3)
(m-2-3) edge  (m-2-2)
(m-2-1) edge  (m-3-1)
(m-3-1) edge  (m-2-2)
(m-3-2) edge  (m-2-3)
(m-3-2) edge  (m-3-1)
(m-2-1) edge[-,dotted]  (m-2-2)
(m-1-3) edge[-,dotted]  (m-2-2)
(m-3-2) edge[-,dotted]  (m-2-2)
;\end{tikzpicture}\end{center} 
\begin{lemma}\label{lem:3non-flat_rep_type}
Let $\A$ be a non-flat tri-graded staircase algebra. Then $\A$ is of infinite representation type. In more detail, $\A$ is tame non-concealed if and only if $\A=\A(3)$ and wild otherwise.
\end{lemma}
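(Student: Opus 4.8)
The plan is to prove both assertions with the reduction machinery of Subsection \ref{ssect:reductions} together with the Tits-form criteria of Theorems \ref{thm:crit_finite} and \ref{thm:crit_tame} and Corollary \ref{cor:crit_wild}, all of which apply because every $\A(\Lambda)$ is strongly simply connected by Proposition \ref{prop:properties}. The first step is to pin down the minimal non-flat shape. If $\Lambda$ is non-flat then, by negating the three defining conditions of flatness, $\Lambda$ lies in none of the unions $\{x_i=1\}\cup\{x_j=1\}$; hence it contains a box with $x_1,x_2\geq 2$, one with $x_1,x_3\geq 2$ and one with $x_2,x_3\geq 2$, and since $\Lambda$ is down-closed this forces $(2,2,1),(2,1,2),(1,2,2)\in\Lambda$ together with all their predecessors. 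Thus every non-flat $\Lambda$ contains the seven boxes $(1,1,1),(2,1,1),(1,2,1),(1,1,2),(2,2,1),(2,1,2),(1,2,2)$; this set is down-closed, hence convex, and carries exactly the three commutativity relations at the sink $(1,1,1)$, so the induced convex subcategory is precisely $\A(3)$. In particular $\A(3)$ is itself non-flat, and by Lemma \ref{lem:reduct_subcat} its representation type controls all larger non-flat algebras; since $\A(3)$ is of infinite type (below), every non-flat $\A(\Lambda)$ is of infinite type by Lemma \ref{lem:reduct_subcat}(3).

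Next I would show that $\A(3)$ is tame non-concealed. A direct computation of the Tits form $q_{\A(3)}$ shows that the vector with entry $0$ at the sink $(1,1,1)$ and entry $1$ at each of the other six vertices is annihilated by $q_{\A(3)}$; being a non-zero element of $\mathbf{N}^{\Q_0}$ on which $q_{\A(3)}$ vanishes, it certifies that $q_{\A(3)}$ is not weakly positive, so $\A(3)$ is of infinite type by Theorem \ref{thm:crit_finite}. Completing the square (most cleanly after separating off the sink variable) shows $q_{\A(3)}$ is weakly non-negative, whence $\A(3)$ is tame by Theorem \ref{thm:crit_tame}. Finally, deleting the sink $(1,1,1)$ yields a proper convex subcategory whose quiver is the hexagon on the remaining six vertices with no surviving relations, i.e.\ the hereditary algebra of Euclidean type $\widetilde{\mathbb{A}}_5$; this is already of infinite representation type, so $\A(3)$ is not minimal infinite and hence not tame concealed. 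Therefore $\A(3)$ is tame non-concealed.

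For the converse I would classify the minimal non-flat shapes strictly larger than $\A(3)$. Adding a single box to the seven-box corner while remaining a Young diagram yields either an axis extension $(3,1,1)$ (with its two symmetric variants $(1,3,1),(1,1,3)$ identified with it by Lemma \ref{lem:reduct_orientation}) or the top box $(2,2,2)$ completing the $2\times 2\times 2$ cube; and any non-flat $\Lambda\supsetneq\A(3)$ contains at least one of these as a convex subcategory. For the axis extension the new vertex attaches by a single arrow to $(2,1,1)$ and produces no new relation, so its Tits form is $q_{\A(3)}+v^2-v\,v_{(2,1,1)}$. Evaluating at the doubled radical vector of $\A(3)$ (value $2$ at every non-sink vertex, $0$ at the sink) together with $v=1$ gives $0+1-1\cdot 2=-1$, so this algebra is wild by Corollary \ref{cor:crit_wild}, and wildness propagates to every $\Lambda$ containing it by Lemma \ref{lem:reduct_subcat}(2).

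The main obstacle is the remaining borderline case, the full $2\times 2\times 2$ cube $\A(3)\cup\{(2,2,2)\}$. Adjoining the top vertex introduces three further commutativity relations, and the resulting Tits form turns out to be only \emph{weakly} non-negative: it is indefinite over $\mathbf{Z}$ (a symmetric/antisymmetric splitting with respect to the top--bottom duality exhibits negative values) yet non-negative on all of $\mathbf{N}^{\Q_0}$, because making it negative forces entries of opposite sign that the duality constraint rules out for non-negative vectors. Consequently no single dimension vector can certify wildness via Corollary \ref{cor:crit_wild}, and this case escapes the elementary quadratic-form reductions used for the other extensions. Deciding the representation type of the cube is therefore the crux of the lemma and must be handled by a dedicated argument tailored to this shape — for instance a direct study of the one-point (co)extension by the top vertex or an explicit family construction — after which Lemma \ref{lem:reduct_subcat} upgrades the conclusion to all remaining non-flat $\Lambda$ and completes the proof.
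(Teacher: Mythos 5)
Your proposal reproduces the paper's own proof almost step for step in every part it actually completes: the reduction showing any non-flat diagram contains the seven-box corner whose induced convex subcategory is $\A(3)$; tameness of $\A(3)$ via explicit weak non-negativity of its Tits form; failure of concealedness via the relation-free hexagonal cycle obtained by deleting the sink (the paper calls it extended Dynkin type $\widetilde{A}_6$, but it is the same six-vertex Euclidean cycle you correctly identify as $\widetilde{A}_5$); and wildness of the axis-type enlargements via the vector with entries $2$ on the hexagon, $0$ at the sink and $1$ at the pendant vertex, of Tits value $-1$ --- this is precisely the paper's displayed witness together with Corollary \ref{cor:crit_wild} and Lemma \ref{lem:reduct_subcat}.

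However, your proof is incomplete, and you say so yourself: the full $2\times 2\times 2$ cube is never handled, while the lemma asserts that this algebra is wild. This is a genuine gap, not a removable technicality. Your claim that the cube's Tits form is weakly non-negative is correct (after minimizing over the source and sink coordinates one gets $q \geq \sum_i\bigl(x_i^2+y_i^2+x_iy_i\bigr) - \tfrac14\bigl(\sum_i(x_i+y_i)\bigr)^2 \geq 0$ on non-negative vectors), so Corollary \ref{cor:crit_wild} can never certify wildness here; and since deleting both the source and the sink leaves the relation-free cycle $\widetilde{A}_5$, the cube is not strongly simply connected, so Theorem \ref{thm:crit_tame} cannot be invoked in either direction (for the same reason the blanket appeal to Proposition \ref{prop:properties} is delicate for non-flat shapes). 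What you may not have noticed is that the paper's own proof has the identical gap: it disposes of all remaining non-flat cases by claiming they contain the eight-vertex witness as a convex subcategory, but a convex subcategory is full and path-closed by the paper's Definition, and any such subcategory of the cube containing the corner together with the vertex $(2,2,2)$ must contain all three arrows out of $(2,2,2)$ --- that is, it equals the whole cube, not the witness --- so Lemma \ref{lem:reduct_subcat} does not apply. Worse, the cube is simultaneously a one-point extension and a one-point coextension of the hexagon at the same homogeneous tube (a coil enlargement in the sense of Assem--Skowro\'nski--Tom\'e), which together with the weakly non-negative Tits form strongly suggests it is tame rather than wild; at the very least, its representation type requires a dedicated argument that neither your proposal nor the paper supplies.
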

\begin{proof}
Every proper tri-graded non-flat staircase algebra contains the algebra $\A(3)$ as a subcategory. This algebra is tame, since it does not contain a hypercritical algebra \cite{Un}. By Theorem \ref{thm:crit_tame}, we can also show tameness straight away, since the corresponding quadratic form $q\coloneqq q_{\A(3)}$ is weakly non-negative (we stick to the notation used before):
\[\begin{array}{ll}
q(a,b,c,d,e,f,g)&= (\frac{a}{2}-b+\frac{e}{2}+ \frac{g}{2})^2
+ (\frac{a}{2}-c+\frac{e}{2}+ \frac{f}{2})^2+ (\frac{a}{2}-d+\frac{f}{2}+ \frac{g}{2})^2\\
& ~~~~ + \frac{a^2}{4}+ (\frac{e}{2}-\frac{f}{2})^2+ (\frac{e}{2}-\frac{g}{2})^2+ (\frac{f}{2}-\frac{g}{2})^2\\
& \geq 0.
\end{array} \]
The algebra $\A(3)$ is not tame concealed by \cite{HaVo} (see Subsection \ref{ssect:theory}), since it contains an algebra of extended Dynkin type $\widetilde{A}_6$ as a convex subcategory which has the minimal nullroot
\begin{center}\small\begin{tikzpicture}
\matrix (m) [matrix of math nodes, row sep=0.8em,
column sep=0.8em, text height=0.8ex, text depth=0.1ex]
{          & \bullet_1 & \bullet_1  \\
 \bullet_1 & \bullet_0 & \bullet_1  \\
 \bullet_1 & \bullet_1 &            \\
 };
\path[->]
(m-1-3) edge  (m-1-2)
(m-1-2) edge  (m-2-2)
(m-2-1) edge  (m-1-2)
(m-1-3) edge  (m-2-3)
(m-2-3) edge  (m-2-2)
(m-2-1) edge  (m-3-1)
(m-3-1) edge  (m-2-2)
(m-3-2) edge  (m-2-3)
(m-3-2) edge  (m-3-1)
(m-2-1) edge[-,dotted]  (m-2-2)
(m-1-3) edge[-,dotted]  (m-2-2)
(m-3-2) edge[-,dotted]  (m-2-2)
;\end{tikzpicture}\end{center} 
Every remaining proper tri-graded non-flat staircase algebra is wild by Lemma \ref{lem:reduct_subcat}, since the algebra $\A$ of the following quiver with relations
\begin{center}
\begin{tikzpicture}
\matrix (m) [matrix of math nodes, row sep=0.8em,
column sep=0.8em, text height=0.8ex, text depth=0.1ex]
{          & \bullet_2 & \bullet_2  \\
 \bullet_2 & \bullet_0 & \bullet_2 &\bullet_1  \\
 \bullet_2 & \bullet_2 &            \\
 };
\path[->]
(m-1-3) edge  (m-1-2)
(m-1-2) edge  (m-2-2)
(m-2-1) edge  (m-1-2)
(m-1-3) edge  (m-2-3)
(m-2-3) edge  (m-2-2)
(m-2-1) edge  (m-3-1)
(m-3-1) edge  (m-2-2)
(m-3-2) edge  (m-2-3)
(m-3-2) edge  (m-3-1)
(m-2-4) edge  (m-2-3)
(m-2-1) edge[-,dotted]  (m-2-2)
(m-1-3) edge[-,dotted]  (m-2-2)
(m-3-2) edge[-,dotted]  (m-2-2)
;\end{tikzpicture}
\end{center}
is of wild representation type (the sketched dimension vector $\df$ fulfills $q_{\A}(\df)=-1$ and Corollary \ref{cor:crit_wild} yields wildness, thus). 
\end{proof}
For tri-graded nilpotent tuples, we directly get the following criterion.
 \begin{proposition}
Let $V$ be a proper tri-graded non-flat vector space. Then there are (up to Levi-base change) infinitely many tri-graded nilpotent tuples of $V$.
\end{proposition}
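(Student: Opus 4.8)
The plan is to reduce the statement, via Lemma \ref{lem:grad_fin_dim}, to showing that the category $\rep\A(\Lambda)(\underline{\dim}V)$ contains infinitely many isomorphism classes, where $\Lambda:=\sh(V)$. The subtlety to keep in mind is that Lemma \ref{lem:3non-flat_rep_type} only guarantees that $\A(\Lambda)$ has infinite representation type, hence infinitely many indecomposables spread over \emph{all} dimension vectors; what I actually need is infinitely many pairwise non-isomorphic representations sitting at the \emph{single, prescribed} dimension vector $\underline{\dim}V$. My strategy is therefore to exhibit an explicit one-parameter family at a convenient nullroot and then pad it up to $\underline{\dim}V$.

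First I would use the proof of Lemma \ref{lem:3non-flat_rep_type}: since $\Lambda$ is proper and non-flat, $\A(\Lambda)$ contains $\A(3)$, and the latter contains the extended Dynkin subquiver $\widetilde{A}_6$ on the six outer vertices $b,e,c,f,d,g$, carrying the minimal nullroot $h$ that takes value $1$ on these six vertices and $0$ on the central vertex $a$ (as displayed in that proof). By the classical theory of tame hereditary algebras, this subquiver admits an infinite family $(M_\lambda)_{\lambda\in K}$ of pairwise non-isomorphic indecomposable representations of dimension vector $h$. Next I would extend each $M_\lambda$ by zero to a representation of $\A(\Lambda)$ and verify that this is legitimate: the hexagon carries the bipartite orientation (in $\A(3)$ the vertices $b,c,d$ are sinks and $e,f,g$ are sources), so no path of length two is supported on it. Consequently every commutativity relation of $\A(\Lambda)$ evaluates to $0=0$ on $M_\lambda$, because each of its two length-two paths meets a vertex outside the support and hence acts as the zero map. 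Thus the $M_\lambda$ are genuine, pairwise non-isomorphic indecomposable $\A(\Lambda)$-representations of dimension vector $h$ (read inside $\Q(\Lambda)_0$).

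Finally I would pad up to $\underline{\dim}V$. Each support vertex of $h$ is a box of $\Lambda$, so $(\underline{\dim}V)_i\geq 1=h_i$ there, while $h_i=0$ off the support; hence $\underline{\dim}V-h\geq 0$ componentwise. Choosing any fixed representation $N$ with $\underline{\dim}N=\underline{\dim}V-h$ — for instance a direct sum of the simples $S(i_1,\dots,i_k)$, which satisfy all relations trivially — the representations $M_\lambda\oplus N$ all lie in $\rep\A(\Lambda)(\underline{\dim}V)$, and the Krull--Remak--Schmidt cancellation property gives that $M_\lambda\oplus N\cong M_\mu\oplus N$ forces $M_\lambda\cong M_\mu$, i.e.\ $\lambda=\mu$. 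Therefore $\rep\A(\Lambda)(\underline{\dim}V)$ has infinitely many isomorphism classes, and Lemma \ref{lem:grad_fin_dim} yields infinitely many tri-graded nilpotent tuples of $V$ up to Levi-base change.

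I expect the only genuine obstacle to be exactly the passage from \textquotedblleft infinite representation type\textquotedblright~to \textquotedblleft infinitely many classes at the fixed dimension vector $\underline{\dim}V$\textquotedblright; the explicit nullroot family supported away from the central vertex, together with the zero-extension check and the cancellation argument, is precisely what bridges this gap. Everything else (existence of the tame hereditary $\mathbb{\widetilde{A}}$-family, validity of the simple-module complement) is routine.
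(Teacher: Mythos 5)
Your proof is correct and follows essentially the same route as the paper: the paper's own proof simply exhibits the hexagon nullroot (the dimension vector with $1$'s on the six outer vertices of $\A(3)$ and $0$ at the center), asserts that it supports infinitely many isomorphism classes, and invokes Lemma \ref{lem:grad_fin_dim}. The zero-extension, padding by a semisimple complement, and Krull--Remak--Schmidt cancellation that you spell out are exactly what the paper's terse ``the claim follows'' compresses, including the tacit assumption (needed equally by both arguments, given that $\sh(V)$ is only the downward closure of the support of $V$) that $\underline{\dim}V$ is nonzero on those six boxes.
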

 \begin{proof}
Since there are infinitely many isomorphism classes of representations of dimension vector  \begin{center}\small\begin{tikzpicture}
\matrix (m) [matrix of math nodes, row sep=0.2em,
column sep=0.2em, text height=0.2ex, text depth=0.1ex]
{          & 1 & 1  \\
 1 & 0 & 1  \\
 1 & 1 &     \\ 
 };
\end{tikzpicture},\end{center}
the claim follows from Lemma \ref{lem:grad_fin_dim}.
\end{proof}

\subsection{Flat tri-gradings}\label{ssect:tri_flat}
It remains to  classify the representation type of each flat tri-graded staircase algebra.

Let $\lambda=(\lambda_1,...,\lambda_k)$ and $\mu=(\mu_1,...,\mu_l)$ be two decreasing partitions, such that $\lambda_1=\mu_1=:x$. These yield a flat tri-graded staircase algebra $\A(\lambda,\mu)$ and our aim is to decide: Is this algebra of finite representation type? Note that clearly the representation type of $\A(\lambda,\mu)$ and $\A(\mu,\lambda)$ coincides.

 The number $k+l-1$ is called the \textit{ground length} of $\A(\lambda,\mu)$; the number $k$ is called \textit{wall}.
 
 \begin{example}
The algebra $\A(\lambda,\mu)$, where $\lambda=(4,2,1)$ and $\mu=(4,3,3)$ as in Example \ref{ex:flat} is given by the following quiver with relations.
\begin{center}
\begin{tikzpicture}
\matrix (m) [matrix of math nodes, row sep=0.7em,
column sep=0.7em, text height=0.79ex, text depth=0.1ex]
{ &      &         & \bullet  &         &\\
  &      &         & \bullet  & \bullet & \bullet\\
  &      & \bullet & \bullet  & \bullet & \bullet\\
&\bullet & \bullet & \bullet  & \bullet & \bullet\\
(\lambda,\mu)=&(1 & 2 & \underline{4}  & 3 & 3)\\ };
\path[->]
(m-1-4) edge  (m-2-4)
(m-2-4) edge  (m-3-4)
(m-2-5) edge  (m-2-4)
(m-2-6) edge   (m-2-5)
(m-2-5) edge   (m-3-5)
(m-2-6) edge   (m-3-6)
(m-3-3) edge  (m-3-4)
(m-3-5) edge   (m-3-4)
(m-3-6) edge   (m-3-5)
(m-3-3) edge   (m-4-3)
(m-3-4) edge   (m-4-4)
(m-3-5) edge   (m-4-5)
(m-3-6) edge   (m-4-6)
(m-4-2) edge   (m-4-3)
(m-4-3) edge   (m-4-4)
(m-4-5) edge   (m-4-4)
(m-4-6) edge   (m-4-5);
\path[-]
(m-3-3) edge[dotted]    (m-4-4)
(m-2-5) edge[dotted]    (m-3-4)
(m-2-6) edge[dotted]    (m-3-5)
(m-3-5) edge[dotted]    (m-4-4)
(m-3-6) edge[dotted]    (m-4-5)
;\end{tikzpicture}.
\end{center}
The algebra has ground length $5$ which is given by the number of entries of 
$(\lambda,\mu)$. The quiver also illustrates that the wall is $3$; this is the length of $\lambda$ and can be thought of as the line of vertices where all paths end.
 \end{example}


\begin{lemma}\label{lem:3flat_rep_type}
A proper flat tri-graded staircase algebra $\A(\lambda,\mu)$ is  
\begin{enumerate}
\item[(a)]
of finite representation type if and only if there is $(\lambda',\mu')$, such that $(\lambda,\mu)\leq_c (\lambda',\mu')$ or $(\mu,\lambda)\leq_c (\lambda',\mu')$ and such that there is some $x\in\mathbf{N}$ for which $(\lambda',\mu')$ comes up in the following list:
\[\begin{array}{l}
(1,\underline{5},2), (4,\underline{4},1), (1,\underline{5},1^2), (2,\underline{2},2^2), (1,\underline{2},2^3),(3,\underline{3},1^3), (1,\underline{3},1^4), \\
 (1,\underline{2},2,1^3), (2^2,\underline{2},1^3), (1^2,\underline{2},1^4), (1,\underline{x},1),
 (1,\underline{2},1^x), (2,\underline{2},1^x),
\end{array}
  \]

\item[(b)] tame concealed if and only if $(\lambda,\mu)$ or $(\mu,\lambda)$ comes up in the following list:
\[\begin{array}{l}
(2,\underline{3},2) , (3,\underline{5},1), (2,\underline{6},1), (1,\underline{3},2,1),(2,\underline{4},1^2),(1,\underline{6},1^2), (1,\underline{4},1^3),  \\
 (1,2,\underline{2},2,1),  (1^2,\underline{3},1^2),  (1,2^2,\underline{2},2),   (1,\underline{2},2^2,1^2),   (1^2,\underline{2},2,1^2), (2,\underline{2},2,1^3),  \\
 (2,\underline{3},1^4), (1,\underline{2},2,1^4), (1,2,\underline{2},1^4), (1,\underline{3},1^5), (1^3,\underline{2},1^3), (1^2,\underline{2},1^5),
\end{array} \]
\item[(c)] tame non-concealed if and only if $(\lambda,\mu)$ or $(\mu,\lambda)$ comes up in the following list:
\[\begin{array}{l}
(2,\underline{3},3) ,(3,\underline{3},3), (4,\underline{5},1), (1,\underline{3},2^2), (1,\underline{3},3,1), (3,\underline{4},1^2),(2,\underline{2},2^3),\\
 (2^2,\underline{2},2,1), (2^2,\underline{2},2^2), (1,\underline{2},2^3,1),(1^2,\underline{2},2^2,1),  (3,\underline{3},1^4), (2^2,\underline{2},1^4).
\end{array} \]

\end{enumerate}
Otherwise, $\A(\lambda,\mu)$ is of wild representation type. 

\end{lemma}
The cases are illustrated in Appendix \ref{app:case_study}; for the tame concealed cases, the numbering of the BHV-list is added there, as well.
\begin{proof}
To begin with, we mention that for each integer $x$, the algebra $\A(2,\underline{2},1^x)$ is of finite representation type; the Auslander-Reiten quiver of the case $\A(2,\underline{2},1^4)$ can be obtained by knitting techniques and is depicted in \ref{app:arq_221111} and its generalization is easily seen.

We structure the classification by the ground length of $\A(\lambda,\mu)$.

\textit{Ground length $3$}: The wall is $2$ and and we  begin by showing that $\A(\lambda,\mu)$  is of finite representation type if (up to symmetry by Lemma \ref{lem:reduct_orientation})
 $(\lambda,\mu)\leq_c(\lambda',\mu')$, where  $(\lambda',\mu')\in\{ (1,\underline{x},1), (2,\underline{2},2), (1,\underline{5},2), (4,\underline{4},1)\}$ for some integer $x$.
By reduction via Lemma \ref{lem:reduct_subcat}, it suffices to show finiteness for the maximal cases:  
The algebra $\A(1,\underline{x},1)$ is of finite representation type for arbitrary $x$, since  the algebra is of Dynkin type $D_{x+2}$. The algebras
$\A(2,\underline{2},2)$ and $\A(2,\underline{5},1)$ are of finite representation type as subcategories of tame concealed algebras (in more detail, numbers $92$ and $20$ of the BHV-list \cite{GaRo}).
And $\A(4,\underline{4},1)$ is  correlated to $A(2^3,\underline{2},1)$ which is of finite representation type, since the Auslander Reiten quiver is finite (see \ref{app:arq_12222}).

The algebras $\A(2,\underline{3},2)$, $\A(2,\underline{6},1)$ and $\A(3,\underline{5},1)$ are tame concealed by the BHV-list (numbers $92$, $20$ and $86$ \cite{GaRo}). Furthermore, the cases $\A(2,\underline{3},3)$, $\A(3,\underline{3},3)$ and $\A(4,\underline{5},1)$ are not of finite representation type and not tame concealed \cite{HaVo} (see Subsection \ref{ssect:theory}), since they contain tame concealed subcategories. They are not wild and, thus, tame non-concealed, by Theorem \ref{thm:crit_tame}, since they do not contain any hypercritical subcategory \cite{Un}.

Every remaining case is of wild representation type by Lemma \ref{lem:reduct_orientation}, because $\A(2,\underline{4},2)$ and $\A(2,\underline{7},1)$ are hypercritical and $\A(3,\underline{6},1)$ contains the hypercritical subcategory $\A(2,\underline{5},1^2)$.

\textit{Ground length $4$}: Without loss of generality, by Lemma \ref{lem:reduct_orientation}, we choose the wall to be $2$. The algebra $\A(\lambda,\mu)$ is of finite representation type if
$(\lambda,\mu)\leq_c (\lambda',\mu')$, where $(\lambda',\mu')\in \{(1,\underline{5},1^2), (3,\underline{3},1^2), (2,\underline{2},2^2)\}$.
These cases are finite by Lemma \ref{lem:reduct_subcat}, since the maximal cases are finite: $\A(1,\underline{5},1^2)$ is of Dynkin type $E_8$ and the algebra $\A(3,\underline{3},1^2)$ is an idempotent quotient of the algebra $\A(3,\underline{3},1^3)$ which is of finite representation type. The algebra $\A(2,\underline{2},2^2)$ is of finite representation type as a subcategory of the tame concealed algebra number $137$ \cite{GaRo}.

There are three tame concealed cases $\A(1,\underline{3},2,1)$, $\A(2,\underline{4},1^2)$ and $\A(1,\underline{6},1^2)$ which correspond to numbers $11$ and $12$ in the BHV-list \cite{GaRo} and to the Euclidean quiver of type $\widetilde{E_8}$. Furthermore, the cases 
$\A(1,\underline{3},3,1)\sim \A(1,\underline{3},2^2)$ and $\A(3,\underline{4},1^2)$ are not finite, as they contain subcategories of tame concealed types $\A(1,\underline{3},2,1)$ or $\A(2,\underline{4},1^2)$ and not tame concealed for the same reason \cite{HaVo} (see Subsection \ref{ssect:theory}). They are not wild by Theorem \ref{thm:crit_tame}, since they do not contain any hypercritical algebra as a subcategory. Thus, they are tame non-concealed.

Every remaining case is of wild representation type, since the algebras $\A(2,\underline{3},2,1)$, $\A(1,\underline{4},2,1)$ $\A(2,\underline{5},1^2)$ and $\A(1,\underline{7},1^2)$ are hypercritical by \cite{Un}.

\textit{Ground length $5$}: By symmetry \ref{lem:reduct_orientation}, two cases appear: either the wall equals $2$ or $3$.

Let the wall be $2$. We show that $\A(\lambda,\mu)$ is of finite representation type if 
$(\lambda,\mu)\leq_c(\lambda',\mu')$, such that $(\lambda',\mu')\in \{ (2,\underline{2},2,1^2),  (1,\underline{2},2^3), (3,\underline{3},1^3)\}$. These cases are finite by Lemma \ref{lem:reduct_subcat}, since the Auslander-Reiten quivers of  $\A(3,\underline{3},1^3)$ and $\A(1,\underline{2},2^3)$ are finite and depicted in \ref{app:arq_33111} and \ref{app:arq_12222}; and since the algebra $\A(2,\underline{2},2,1^2)$ is of finite representation type as a subcategory of a tame concealed algebra (namely, of number $95$ \cite{GaRo}).

The algebra $\A(2,\underline{2},2^2,1)$ is tame concealed as number $137$ of the BHV-list \cite{GaRo} and $\A(1,\underline{4},1^3)$ is tame concealed of Euclidean type $\widetilde{E_7}$. Furthermore, the algebra $\A(2,\underline{2},2^3)$ is tame non-concealed as it contains the tame concealed subcategory 
$\A(2,\underline{2},2^2,1)$ \cite{HaVo} (see Subsection \ref{ssect:theory}), but no hypercritical subcategory \cite{Un}. Every remaining case is wild, since the algebras $\A(1,\underline{3},2,1^2)$ and $\A(2,\underline{4},1^3)$ are hypercritical \cite{Un}.

Let the wall be $3$. Then $\A(\lambda,\mu)$ is of finite representation type if and only if 
$(\lambda,\mu)\leq_c (2^2,\underline{2},1^2)$ by Lemma \ref{lem:reduct_orientation}: the algebra $\A(2^2,\underline{2},1^2)$ is of finite representation type as a subcategory of the tame concealed algebra number $86$ \cite{GaRo}. 
The only tame concealed cases are $\A(1,2,\underline{2},2,1)$ and $\A(1^2,\underline{3},1^2)$, the former is number $93$ of the BHV-list \cite{GaRo}, the latter is of Euclidean type $\widetilde{E}_6$. There are two tame non-concealed algebras, namely $\A(2^2,\underline{2},2,1)$ and $\A(2^2,\underline{2},2^2)$: these contain the tame concealed subcategory $\A(1,2,\underline{2},2,1)$ \cite{HaVo} (see Subsection \ref{ssect:theory}), but no hypercritical subcategory \cite{Un}.
Every remaining case is of wild representation type, since $\A(1,2,\underline{3},1^2)$ is hypercritical by \cite{Un}.

\textit{Ground length $6$}: Again, there are (up to symmetry \ref{lem:reduct_orientation}) two cases to consider: the wall equals $2$ or $3$.\\
First, let the wall be $3$. Then $\A(\lambda,\mu)$ is of finite representation type if  $(\lambda,\mu)\leq_c (2^2,\underline{2},1^3)$, since $\A(2^2,\underline{2},1^3)\sim \A(3,\underline{3},1^3)$ has a finite Auslander-Reiten quiver \ref{app:arq_33111}. There is one tame concealed algebra, namely $\A(1^2,\underline{2},2,1^2)$ which corresponds to number $12$ in the BHV-list \cite{GaRo}. Furthermore, the algebra $\A(1^2,\underline{2},2^2,1)$ is tame non-concealed: representation-infinite as it contains the tame concealed algebra $\A(1^2,\underline{2},2,1^2)$ as a subcategory, thus also non-concealed by \cite{HaVo} (see Subsection \ref{ssect:theory}) and not wild by Theorem \ref{thm:crit_tame}, since it does not contain any hypercritical subcategory. Every remaining case is of wild representation type by reduction \ref{lem:reduct_subcat}, since the algebras $\A(1,2,\underline{2},2,1^2)$ and and $\A(1^2,\underline{3},1^3)$ are hypercritical \cite{Un}.

Let the wall be $2$, then  $\A(\lambda,\mu)$ is of finite representation type if $(\lambda,\mu)\leq_c (\lambda',\mu')$, where $(\lambda',\mu')\in\{(2,\underline{2},1^4), (1,\underline{2},2,1^3), (1,\underline{3},1^4) \}$.
These cases are finite by \ref{lem:reduct_subcat}: $\A(2,\underline{2},1^4)\sim \A(3,\underline{3},1^3)$ has been seen to be of finite representation type before (see Appendix \ref{app:arq_33111} for its finite Auslander-Reiten quiver) and the algebra $\A(1,\underline{2},2,1^3)$ is of finite representation type as a subcategory of the tame concealed algebra number $20$ \cite{GaRo}. The algebra $\A(1,\underline{3},1^4)$ is of Dynkin type $E_8$. There are three tame concealed cases, namely $\A(2,\underline{2},2,1^3)$, $\A(1,\underline{2},2^2,1^2)$ and $\A(2,\underline{3},1^4)$ which correspond to numbers $95$, $86$ and $17$ of the BHV-list \cite{GaRo}. The algebras $\A(1,\underline{2},2^3,1)$ and $\A(3,\underline{3},1^4)$ are tame non-concealed: they each contain tame concealed subcategories (namely, $\A(1,\underline{2},2^2,1^2)$ and $\A(2,\underline{3},1^4)$), but no hypercritical subcategory \cite{Un}. 
Every remaining case is wild, since $\A(1,\underline{4},1^4)$ is hypercritical and 
$\A(1,\underline{3},2,1^3)$ contains the hypercritical algebra $\A(1,\underline{3},2,1^2)$.

\textit{Ground length $7$}: In this case, the wall can be assumed to be $2$, $3$ or $4$ (up to symmetry by Lemma \ref{lem:reduct_orientation}).

Let the wall be $2$: The only finite case $\A(2,\underline{2},1^5)$ has been considered before. There are two tame concealed cases: $\A(1,\underline{2},2,1^4)$ as number $20$ of the BHV-list \cite{GaRo} and $\A(1,\underline{3},1^5)$ which is of Euclidean type $\widetilde{E_8}$.  Every remaining case is of wild representation type by Lemma \ref{lem:reduct_subcat}, since $\A(2,\underline{2},2,1^4)$ is hypercritical \cite{Un} and $\A(1,\underline{2},2^2,1^3)$, $\A(2,\underline{3},1^5)$ and $\A(1,\underline{3},2,1^4)$ contain hypercritical subcategories.

Let the wall be $3$:
The only finite case $\A(1^2,\underline{2},1^4)$  is of Dynkin type $E_8$. There is one tame concealed algebra $\A(1,2,\underline{2},1^4)$ (namely, number $17$ of the BHV-list \cite{GaRo}) and one tame non-concealed algebra $\A(2^2,\underline{2},1^4)$ which contains the mentioned tame concealed subcategory, but no hypercritical subcategory (Theorem \ref{thm:crit_tame}) \cite{Un}. Every remaining case is wild, since $\A(1^2,\underline{2},2,1^3)$  and $\A(1^2,\underline{3},1^4)$ are wild (the former is hypercritical and that latter contains the hypercritical $\A(1^2,\underline{3},1^3)$, see ground length $6$).

Let the wall be $4$:  There is no case of finite representation type, since the algebra $\A(1^3,\underline{2},1^3)$ is tame concealed of Euclidean type $\widetilde{E_7}$. Every remaining case is of wild representation type, since $\A(1^2,2,\underline{2},1^3)$ is hypercritical and $\A(1^3,\underline{3},1^3)$ contains the hypercritical subcategory $\A(1^3,\underline{3},1^2)$.

\textit{Ground length $\geq 8$}:\\
 The only cases of finite representation type are $\A(2,\underline{2},1^x)$ (see above).
Every remaining algebra $\A(\lambda,\mu)$ is of wild representation type, since it contains one of the hypercritical subcategories $\A(1,\underline{2},2,1^5)$, $\A(1,\underline{3},1^6)$, $\A(1^3,\underline{2},1^4)$, $\A(1,2,\underline{2},1^5)$ or  $\A(1^2,\underline{2},2,1^3)$.
\end{proof}

By Theorem \ref{thm:crit_multi_grad_pairs}, we obtain the following proposition on tri-graded nilpotent tuples.
 \begin{corollary}
Let $\Lambda=(\lambda,\mu)$ be a flat tri-graded Young diagram. Then the number of $\Lambda$-graded nilpotent tuples (up to Levi-base change) is finite if and only if $\Lambda$ is listed in Lemma \ref{lem:3flat_rep_type} (a).
\end{corollary}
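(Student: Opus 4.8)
The plan is to deduce this statement directly from the two equivalences already established in the excerpt, so that no new representation-theoretic work is required at this point. First I would invoke Theorem \ref{thm:crit_multi_grad_pairs}: for any generalized Young diagram $\Lambda$, the number of $\Lambda$-graded nilpotent pairs modulo Levi-base change is finite if and only if the associated staircase algebra $\A(\Lambda)$ is of finite representation type. Since the hypothesis is that $\Lambda=(\lambda,\mu)$ is a \emph{flat} tri-graded Young diagram, the translation of Subsection \ref{ssect:translation} identifies $\A(\Lambda)$ with the flat tri-graded staircase algebra $\A(\lambda,\mu)$, putting us squarely in the setting of Lemma \ref{lem:3flat_rep_type}.

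Next I would feed this into Lemma \ref{lem:3flat_rep_type}(a), which classifies exactly the flat tri-graded staircase algebras of finite representation type as those $(\lambda,\mu)$ appearing in its list (up to correlation and up to the componentwise ordering $\leq_c$). Chaining the two equivalences gives that the number of $\Lambda$-graded nilpotent pairs is finite if and only if $\A(\lambda,\mu)$ is representation-finite, which in turn holds if and only if $\Lambda$ appears in Lemma \ref{lem:3flat_rep_type}(a); this is precisely the asserted criterion.

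I expect no genuine obstacle at this stage, because all the difficulty has been front-loaded into the proof of Lemma \ref{lem:3flat_rep_type}, whose case analysis organized by ground length and wall, together with the reductions of Lemmas \ref{lem:reduct_subcat} and \ref{lem:reduct_orientation}, carries out the real combinatorial and representation-theoretic work. The only point meriting a moment's care is the bookkeeping needed to ensure that the equivalence of Theorem \ref{thm:crit_multi_grad_pairs} is applied to the \emph{same} diagram $\Lambda$ that Lemma \ref{lem:3flat_rep_type}(a) refers to; that is, that ``flat tri-graded Young diagram'' on the geometric side corresponds under the translation to ``flat tri-graded staircase algebra'' on the algebraic side, so that the finite-type list in part (a) transfers verbatim and no cases are silently added or lost when passing between the two languages.
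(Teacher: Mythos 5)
Your proposal is correct and matches the paper exactly: the corollary is stated there as an immediate consequence of Theorem \ref{thm:crit_multi_grad_pairs} combined with the classification in Lemma \ref{lem:3flat_rep_type} (a), which is precisely the chaining of equivalences you describe. Your added remark about checking that flatness on the geometric side matches flatness on the algebraic side is a reasonable point of care, but it is settled by the translation of Subsection \ref{ssect:translation} just as you indicate.
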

By Lemma \ref{lem:grad_fin_dim}, we also obtain finiteness criteria for tri-graded nilpotent tuples of a fixed tri-graded flat vector space.
\begin{lemma}\label{lem:3_fixed_vsp_nullroot}
Let $V$ be a flat tri-graded staircase algebra. If $\dimv V$ contains (up to correlation) one of the minimal nullroots of Figure \ref{fig:tri_min_nullroots}, then there are (up to Levi-base change) infinitely many tri-graded nilpotent tuples of $V$.
\end{lemma}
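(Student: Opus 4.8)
The plan is to reduce the statement about infinitely many tri-graded nilpotent tuples to a statement about the representation variety, exactly as the earlier infrastructure permits. The key tool is Lemma \ref{lem:grad_fin_dim}, which asserts that the number of multi-graded nilpotent tuples of a fixed $V$ (up to Levi-base change) is finite if and only if $R_{\underline{\dim}V}\A(\sh(V))$ has only finitely many $\GL_{\underline{\dim}V}$-orbits, equivalently if and only if $\rep\A(\sh(V))(\underline{\dim}V)$ contains only finitely many isomorphism classes. So the whole problem translates into showing that a fixed dimension vector $\df=\dimv V$ which contains one of the listed minimal nullroots forces infinitely many isomorphism classes of representations of that dimension vector.

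First I would recall that each double partition $(\lambda,\mu)$ in Figure \ref{fig:tri_min_nullroots} is (the shape of) a flat tri-graded staircase algebra that is tame concealed, by Lemma \ref{lem:3flat_rep_type}(b); the displayed nullroots are by construction the minimal positive radical vectors $u\in\rad q_{\A(\lambda,\mu)}$, so $q_{\A(\lambda,\mu)}(u)=0$. For a tame concealed algebra the minimal nullroot is the dimension vector of a one-parameter family of pairwise non-isomorphic indecomposable modules sitting in a tube of the Auslander-Reiten quiver; this is the standard behaviour of tame concealed (equivalently, minimal tame, from the BHV-list) algebras. Hence for the convex subcategory $\A(\lambda,\mu)$ of $\A(\sh(V))$ there are infinitely many isomorphism classes of representations of dimension vector $u$.

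The remaining step is to transport this infinitude up to $\A(\sh(V))$ and to the larger dimension vector $\df$. The hypothesis ``$\df$ contains (up to correlation) one of the minimal nullroots'' means precisely that, after a correlation $\Lambda\sim\Lambda'$ realizing the listed shape as a convex subdiagram (Lemma \ref{lem:reduct_orientation} guarantees representation type is unchanged under correlation, and Lemma \ref{lem:reduct_subcat} handles convex subcategories), the restriction of $\df$ to the vertices of the nullroot is componentwise at least the nullroot $u$. I would then extend each of the infinitely many indecomposables of dimension $u$ to a representation of $\A(\sh(V))$ of dimension $\df$ by padding the remaining vertices with suitable vector spaces and zero (or identity) maps, as in the ``expanding with zeros'' argument cited in the proof of Lemma \ref{lem:reduct_subcat}. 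Since distinct indecomposables of dimension $u$ on the subquiver remain non-isomorphic after this zero-extension, one obtains infinitely many isomorphism classes in $\rep\A(\sh(V))(\df)$, and Lemma \ref{lem:grad_fin_dim} converts this into infinitely many tri-graded nilpotent tuples of $V$.

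The main obstacle I anticipate is the bookkeeping in the last step: one must verify that the chosen extension functor really preserves non-isomorphism of the one-parameter family (the parameter $\lambda\in K$ must survive the padding), and that the extension is compatible with the commutativity relations defining $\A(\sh(V))$ so that the padded data genuinely lies in $R_{\df}\A(\sh(V))$. Both are routine given the explicit zero-extension, but they are where care is needed; everything else is a direct appeal to the tame-concealed structure and to Lemmas \ref{lem:grad_fin_dim}, \ref{lem:reduct_subcat} and \ref{lem:reduct_orientation}.
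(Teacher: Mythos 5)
Your proposal is correct and follows essentially the same route as the paper: the paper treats this lemma as a direct consequence of Lemma \ref{lem:grad_fin_dim}, combined with the fact (invoked explicitly in the proof of Lemma \ref{lem:3_fixed_vsp_tc}, via \cite{HaVo}) that each tame concealed algebra carries a one-parameter family of indecomposables whose dimension vector is the minimal nullroot. Your additional bookkeeping --- zero-padding the family to dimension vector $\dimv V$ and checking the relations and non-isomorphism survive --- is exactly the detail the paper leaves implicit, so there is nothing to correct.
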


 For certain vector spaces, we obtain a concrete finiteness criterion on tri-graded nilpotent tuples.
\begin{lemma}\label{lem:3_fixed_vsp_tc}
Assume that $V$ is flat tri-graded of shape $\Lambda$ and $\A(\Lambda)$ is tame concealed. Then there are only finitely many tri-graded nilpotent tuples on $V$ (modulo base change in the homogeneous components) if and only if $\underline{\dim}V$ does not contain the corresponding minimal nullroot.
\end{lemma}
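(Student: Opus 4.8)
The plan is to translate the statement into the representation theory of $\A(\Lambda)$ and then exploit the well-understood module structure of a tame concealed algebra. By Lemma \ref{lem:grad_fin_dim}, there are only finitely many tri-graded nilpotent tuples on $V$ (modulo Levi-base change) precisely when $\rep\A(\Lambda)(\dimv V)$ contains only finitely many isomorphism classes, so it suffices to decide finiteness of the number of isomorphism classes of $\A(\Lambda)$-representations of the fixed dimension vector $\df:=\dimv V$. Write $\delta$ for the minimal nullroot of $\A(\Lambda)$, that is, the minimal positive generator of $\rad q_{\Lambda}=\mathbf{Z}\delta$ (the Tits form of a tame concealed algebra is positive semidefinite of corank one); saying that $\df$ \emph{contains} $\delta$ then means $\df-\delta\in\mathbf{N}^{\Q(\Lambda)_0}$.

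For the direction ``$\df$ contains $\delta$ $\Rightarrow$ infinitely many'', I would invoke Lemma \ref{lem:3_fixed_vsp_nullroot}: since $\delta$ is (up to correlation) the minimal nullroot attached to the tame concealed algebra $\A(\Lambda)$, the hypothesis $\df\geq\delta$ is exactly the situation treated there. Alternatively one can argue directly: the homogeneous tubes supply a one-parameter family $(R_\mu)_\mu$, indexed by a projective line, of pairwise non-isomorphic indecomposables, each of dimension vector $\delta$; since $\df-\delta\geq 0$ the semisimple representation $N$ of dimension vector $\df-\delta$ exists, because the all-zero maps trivially satisfy the commutativity relations defining $\A(\Lambda)$, and then $(R_\mu\oplus N)_\mu$ is an infinite family of pairwise non-isomorphic representations of dimension vector $\df$.

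The substantial direction is ``$\df$ does not contain $\delta$ $\Rightarrow$ finitely many''. Here I would use the structure of the module category of the tame concealed algebra $\A(\Lambda)$, as summarized in \cite{DlPS,ASS}: its Auslander--Reiten quiver splits into a preprojective component, a preinjective component, and a separating tubular family with only finitely many non-homogeneous tubes, and every indecomposable lying in a homogeneous tube has dimension vector a positive multiple $\ell\delta$ of $\delta$. The key point is that the only continuous families of indecomposables arise from the homogeneous tubes. I would then bound the indecomposables $X$ with $\dimv X\leq\df$: along the preprojective and the preinjective component the total dimension grows without bound, so only finitely many such $X$ occur there; the finitely many non-homogeneous tubes contribute only finitely many $X$ of bounded dimension; and every homogeneous-tube indecomposable satisfies $\dimv X\geq\delta$, so under the hypothesis $\delta\not\leq\df$ none of them satisfies $\dimv X\leq\df$. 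Hence only finitely many indecomposables fit inside $\df$, and as every representation of dimension $\df$ is a direct sum of such indecomposables with bounded multiplicities, there are only finitely many isomorphism classes.

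The main obstacle is the careful use of the tube structure in the second direction: one must justify that the exclusion $\delta\not\leq\df$ removes precisely the one-parameter families (the homogeneous tubes) while leaving only finitely many rigid indecomposables and exceptional-tube modules below $\df$. Once this is established, the remainder is routine bookkeeping about direct-sum decompositions of a fixed dimension vector into finitely many available indecomposables.
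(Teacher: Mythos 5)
Your proposal is correct and follows essentially the same route as the paper: the paper's proof consists of the single observation (citing Happel--Vossieck) that a tame concealed algebra has a unique one-parameter family of indecomposables whose dimension vector is the minimal nullroot $\delta$ generating $\rad q_{\Lambda}$, leaving the translation via Lemma \ref{lem:grad_fin_dim} and the counting implicit. Your write-up fills in exactly those implicit steps -- the padding construction $R_\mu\oplus N$ for the infinite direction, and the preprojective/preinjective/tube bookkeeping showing that $\delta\not\leq\dimv V$ excludes all homogeneous-tube summands and leaves only finitely many admissible indecomposables -- so it is a completed version of the paper's sketch rather than a different argument.
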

\begin{proof}
Any tame concealed algebra has a unique one-parameter family of indecomposable modules $X$ with $\End(X) = K$ and $\Ext^1(X,X) =K$; and the dimension vector of these modules is the minimal positive nullroot (known by \cite{HaVo}) which generates the radical of the quadratic form. 
\end{proof}
\begin{figure}
\begin{center}
\includegraphics[trim=130 495 120 125, clip,width=300pt]{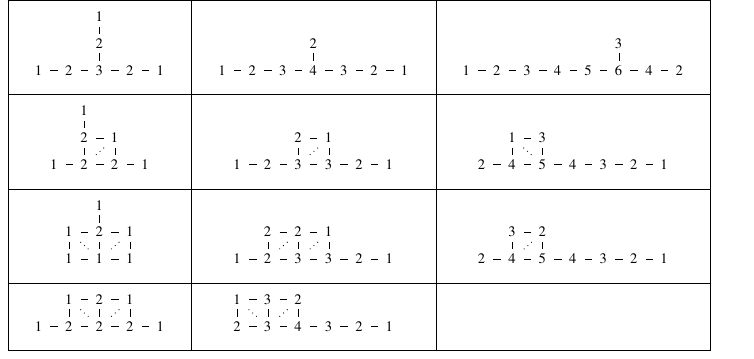}
\end{center}
\caption{Minimal nullroots of flat tri-graded nullroots}\label{fig:tri_min_nullroots}
\end{figure}
\section{Higher graded nilpotent tuples}\label{sect:high_grad}
In the following, we classify the representation types of higher graded staircase algebras, that is, $k$-graded staircase algebras, where $k\geq 4$. This completes the proof of Classification Theorem \ref{thm:class}.

We define  the algebra $\A(4):=\A(\Lambda)$, where
\[\begin{array}{ccc}
\Lambda&=&\{(1,1,1,1), (2,1,1,1),(1,2,1,1),\\&&(1,1,2,1),(1,1,1,2),(2,2,1,1),\\
&&(2,1,2,1),(1,2,1,2),(1,1,2,2) \}.
\end{array}\]

In terms of the quiver $\Q(4):=\Q(\Lambda)$, this means  $\A(4):=K\Q(4)/I$ and
\[\begin{tikzpicture}
\matrix (m) [matrix of math nodes, row sep=0.81em,
column sep=0.8em, text height=1.69ex, text depth=0.1ex]
{&& & (2,2,1,1)&  &\\
&&(2,1,1,1) & & (1,2,1,1) &\\
\Q(4) = &(2,1,2,1) &&(1,1,1,1)&&(1,2,1,2)\\
&& (1,1,2,1) & & (1,1,1,2) &\\
&& & (1,1,2,2)&  &\\
&& & f&  &\\
&&b & & c &\\
~~~~~ =: &i && a&&g\\
&& e & & d &\\
&& & h&  &\\
};
\path[->]
(m-1-4) edge (m-2-3)
(m-1-4) edge (m-2-5)
(m-5-4) edge (m-4-3)
(m-5-4) edge (m-4-5)
(m-3-2) edge (m-2-3)
(m-2-3) edge (m-3-4)
(m-2-5) edge (m-3-4)
(m-3-6) edge (m-2-5)
(m-3-2) edge (m-4-3)
(m-3-6) edge  (m-4-5)
(m-4-3) edge  (m-3-4)
(m-4-5) edge (m-3-4);
\path[-,dotted]
(m-3-2) edge (m-3-4)
(m-3-6) edge (m-3-4)
(m-1-4) edge (m-3-4)
(m-5-4) edge (m-3-4);
\path[->]
(m-6-4) edge (m-7-3)
(m-6-4) edge (m-7-5)
(m-10-4) edge (m-9-3)
(m-10-4) edge (m-9-5)
(m-8-2) edge (m-7-3)
(m-7-3) edge (m-8-4)
(m-7-5) edge (m-8-4)
(m-8-6) edge (m-7-5)
(m-8-2) edge (m-9-3)
(m-8-6) edge  (m-9-5)
(m-9-3) edge  (m-8-4)
(m-9-5) edge (m-8-4);
\path[-,dotted]
(m-8-2) edge (m-8-4)
(m-8-6) edge (m-8-4)
(m-6-4) edge (m-8-4)
(m-10-4) edge (m-8-4);
\end{tikzpicture}\]
The admissible ideal $I$ is generated by all commutative squares depicted in the picture.

\begin{theorem}
\label{thm:high_stair_rep_type}
A proper $k$-graded staircase algebra $\A(\Lambda)$, where $k\geq 4$, is 
\begin{itemize}
\item[(a)] never representation-finite,
\item[(b)] tame concealed if and only if $\Q(\Lambda)$ is of Euclidean type $\widetilde{D_4}$, 
\item[(c)] tame, but not tame concealed if and only if  its Gabriel quiver is a subquiver of $\Q(4)$ with the induced relations, but not of Euclidean type $\widetilde{D_4}$.
\end{itemize} 
 Otherwise, and in particular if $k\geq 5$, the algebra $\A(\Lambda)$ is of wild representation type.
\end{theorem}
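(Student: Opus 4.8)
The plan is to follow exactly the reduction strategy already used for the tri-graded case (Lemma \ref{lem:3non-flat_rep_type} and Lemma \ref{lem:3flat_rep_type}), exploiting that every $\A(\Lambda)$ is strongly simply connected by Proposition \ref{prop:properties}, so that by Theorem \ref{thm:crit_finite} and Theorem \ref{thm:crit_tame} the representation type is entirely governed by the weak positivity and weak non-negativity of the Tits form $q_\Lambda$. The key structural observation to establish first is that any proper $k$-graded staircase algebra with $k\geq 4$ already contains, as a convex subcategory, the algebra $\A(4)$ defined just above the statement: a proper $4$-grading requires at least four independent arrow-directions emanating from (or into) a common vertex $a$, and together with the commutativity relations of the four coordinate squares this forces exactly the quiver $\Q(4)$, which is the $D_4$-spider with its four $2$-step legs and four commutative squares. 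This containment, via Lemma \ref{lem:reduct_subcat}, is what drives parts (a) and (c).

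First I would prove part (a): since $\A(4)$ is a convex subcategory of every proper $k$-graded staircase algebra with $k\geq 4$, Lemma \ref{lem:reduct_subcat}(3) shows that it suffices to check that $\A(4)$ itself is not representation-finite. By Theorem \ref{thm:crit_finite} this reduces to exhibiting a vector $v\in\mathbf{N}^{\Q(4)_0}$ with $q_{\A(4)}(v)\le 0$; the natural candidate is the dimension vector placing $2$ at the central vertex $a$, $1$ at each of the four outer vertices $i,g,f,h$, and appropriate values at the four intermediate vertices $b,c,d,e$, mimicking the nullroot of $\widetilde{D_4}$. Concretely, the full subquiver on $a$ and its four immediate neighbours carries a copy of the Euclidean diagram $\widetilde{D_4}$ with the standard minimal nullroot $(2;1,1,1,1)$, whose $q$-value is $0$; so $q_{\A(4)}$ is not weakly positive and $\A(4)$ is representation-infinite, establishing (a).

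Next I would settle parts (b) and (c) together. For (b), whenever the Gabriel quiver $\Q(\Lambda)$ is literally of type $\widetilde{D_4}$ the algebra is hereditary of tame (domestic) Euclidean type, hence tame concealed by the standard classification; conversely any proper $k\geq 4$ staircase algebra strictly larger than $\widetilde{D_4}$ contains $\widetilde{D_4}$ properly and so cannot be minimal tame, i.e. cannot be tame concealed. For (c), I would use Theorem \ref{thm:crit_tame}: the algebras whose Gabriel quiver is a convex subquiver of $\Q(4)$ (with induced relations) and which strictly contain $\widetilde{D_4}$ are tame precisely because one verifies, by the same weak-non-negativity argument as for $\A(3)$, that $q_{\A(4)}\ge 0$ on all of $\mathbf{Z}^{\Q(4)_0}$ (a sum-of-squares completion analogous to the displayed one in Lemma \ref{lem:3non-flat_rep_type}), and no hypercritical convex subcategory occurs inside $\Q(4)$ by the Unger classification \cite{Un}.

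Finally, for the wild cases and the assertion that $k\geq 5$ is always wild, I would argue by minimal wild witnesses. For $k\geq 5$ the central vertex acquires a fifth incident arrow-direction, producing the star $\widetilde{D_4}$ with an extra leg, i.e. a $D_5$-type spider whose associated form attains a negative value; by Corollary \ref{cor:crit_wild} this yields wildness, and by Lemma \ref{lem:reduct_subcat}(2) the whole algebra is wild. For $k=4$, any proper staircase algebra whose quiver is \emph{not} a subquiver of $\Q(4)$ must, by the local structure of the generalized Young diagram, contain a vertex of $\Q(4)$ with an additional outgoing or incoming arrow beyond the $\Q(4)$-pattern (for instance extending one of the legs to length $3$, or adjoining a further square), and each such minimal extension is hypercritical by \cite{Un}; invoking Corollary \ref{cor:crit_wild} and Lemma \ref{lem:reduct_subcat} then gives wildness. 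The main obstacle I anticipate is the combinatorial bookkeeping in this last step: one must check that the list of minimal proper extensions of $\Q(4)$ arising as $4$-dimensional Young-diagram neighbourhoods is finite and that each is covered by a known hypercritical quiver, exactly the kind of finite case-analysis carried out explicitly for the tri-graded ground lengths in Lemma \ref{lem:3flat_rep_type}.
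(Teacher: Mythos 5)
Your overall strategy---exploiting strong simple connectedness (Proposition \ref{prop:properties}) so that Theorems \ref{thm:crit_finite} and \ref{thm:crit_tame} reduce everything to the Tits form, proving tameness of $\A(4)$ by a sum-of-squares completion, and producing wildness via Corollary \ref{cor:crit_wild} and hypercritical subcategories---is exactly the paper's approach. However, your central structural claim is false: it is \emph{not} true that every proper $k$-graded staircase algebra with $k\geq 4$ contains $\A(4)$ as a convex subcategory. The minimal proper $4$-dimensional Young diagram consists of the five boxes $(1,1,1,1),(2,1,1,1),(1,2,1,1),(1,1,2,1),(1,1,1,2)$; its quiver is precisely the $\widetilde{D_4}$-star on five vertices, and a five-vertex algebra cannot contain the nine-vertex algebra $\A(4)$. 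A proper $4$-grading forces the four arrows into the central vertex, but it does \emph{not} force the four outer ``square'' vertices of $\Q(4)$ (and, when squares do occur, a $4$-dimensional diagram can carry up to six of them, not four, so nothing ``forces exactly $\Q(4)$''). Your claim in fact contradicts parts (b) and (c) of the very theorem being proved: the tame concealed case and the intermediate tame cases are proper $4$-graded algebras \emph{contained in} $\A(4)$, not containing it. The paper uses the containment in the correct direction: every proper $\A(\Lambda)$ with $k\geq 4$ contains the $\widetilde{D_4}$-star (which is hereditary tame concealed, hence representation-infinite) as a convex subcategory, and then Lemma \ref{lem:reduct_subcat} gives (a). As written, your derivation of (a) fails exactly for the minimal cases, even though the needed ingredient (the star with nullroot $(2;1,1,1,1)$) appears in your own text; the repair is to apply that nullroot argument directly to $q_\Lambda$ rather than routing it through the false embedding of $\A(4)$.

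A secondary, smaller gap: in the $k=4$ wild cases you assert that ``each such minimal extension is hypercritical by \cite{Un}.'' This needs verification case by case, and at least one extension is not obviously on any hypercritical list: adjoining a \emph{fifth} square (e.g.\ the box $(2,1,2,1)$) produces a quiver containing neither an extended-leg star nor a five-star, so wildness there requires its own witness (for instance a dimension vector $v$ with $q(v)=-1$, such as value $2$ on the central vertex and the four middle vertices and $1$ on the five square vertices, followed by Corollary \ref{cor:crit_wild}). The paper handles the remaining cases by exhibiting contained wild subquivers (the hypercritical extended $\widetilde{D_4}$ and the five-subspace quiver), so your plan is in the same spirit, but the combinatorial claim you flag as ``bookkeeping'' is precisely where the content lies and cannot be delegated to \cite{Un} wholesale.
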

\begin{proof}
The algebra $\A(\Lambda)$  contains a subcategory of Euclidean type $\widetilde{D}_4$, namely
\[\begin{tikzpicture}
\matrix (m) [matrix of math nodes, row sep=0.2em,
column sep=0.6em, text height=0.6ex, text depth=0.1ex]
{\bullet &&\bullet \\
&  \bullet &\\
\bullet &  & \bullet\\ };
\path[->]
(m-1-1) edge (m-2-2)
(m-1-3) edge  (m-2-2)
(m-3-1) edge (m-2-2)
(m-3-3) edge (m-2-2);
\end{tikzpicture}\]
which corresponds to a tame concealed algebra. Thus, $\A(\Lambda)$ is always of infinite representation type and not tame concealed by \cite{HaVo} (see Subsection \ref{ssect:theory}) except in this one special case. We have to show that $\A(4)$ is of tame representation type. This fact follows from Theorem \ref{thm:crit_tame}, since the algebra does not contain any hypercritical subcategory.  
Since $\A(4)$ is strongly simply connected, by Lemma \ref{thm:crit_tame}, we can also show tameness straight away, since its quadratic form is non-negative:
\[\begin{array}{ll}
q(a,b,c,d,e,f,g,h,i)&= (\frac{a}{2}-b+\frac{f}{2}+ \frac{i}{2})^2+ (\frac{a}{2}-e+\frac{h}{2}+ \frac{i}{2})^2\\
& ~~~~ + (\frac{a}{2}-c+\frac{f}{2}+ \frac{g}{2})^2+ (\frac{a}{2}-d+\frac{h}{2}+ \frac{g}{2})^2\\
& ~~~~ + (\frac{f}{2}-\frac{i}{2})^2+ (\frac{f}{2}-\frac{g}{2})^2+ (\frac{h}{2}-\frac{i}{2})^2+ (\frac{h}{2}-\frac{g}{2})^2\\
& \geq 0.
\end{array} \]
In every remaining case, the Gabriel quiver of $\A(\Lambda)$ contains a hypercritical subquiver of the form 
\[\begin{tikzpicture}
\matrix (m) [matrix of math nodes, row sep=0.2em,
column sep=0.6em, text height=0.6ex, text depth=0.1ex]
{\bullet &&\bullet \\
&  \bullet &\\
\bullet &  & \bullet & \bullet\\ };
\path[->]
(m-1-1) edge (m-2-2)
(m-1-3) edge  (m-2-2)
(m-3-1) edge (m-2-2)
(m-3-3) edge (m-2-2)
(m-3-4) edge (m-3-3);
\end{tikzpicture}\]
or a wild subquiver
\[\begin{tikzpicture}
\matrix (m) [matrix of math nodes, row sep=1.81em,
column sep=1.8em, text height=0.9ex, text depth=0.1ex]
{\bullet &\bullet &\bullet & \bullet & \bullet\\
& &  \bullet &&\\
 };
\path[->]
(m-1-1) edge (m-2-3)
(m-1-2) edge  (m-2-3)
(m-1-3) edge  (m-2-3)
(m-1-4) edge (m-2-3)
(m-1-5) edge (m-2-3);
\end{tikzpicture}\]
Then $\A(\Lambda)$ is of wild representation type by \cite{Un} and Lemma \ref{lem:reduct_subcat}.
\end{proof}

For $k$-graded vector spaces, where $k\geq 4$, we immediately obtain a finiteness criterion.

\begin{corollary}\label{cor:high_grad_inf_tuples}
Let $\Lambda$ be a generalized $k$-dimensional partition, where $k\geq 4$. Then there are (up to Levi-base change) infinitely many $\Lambda$-graded nilpotent tuples.
\end{corollary}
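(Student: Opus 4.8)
The plan is to reduce the statement to the representation type of the associated multi-staircase algebra and then invoke the classification just proved. By Theorem~\ref{thm:crit_multi_grad_pairs}, the number of $\Lambda$-graded nilpotent tuples is finite (up to Levi-base change) precisely when $\A(\Lambda)$ is of finite representation type; so it suffices to show that $\A(\Lambda)$ is \emph{not} representation-finite whenever $\Lambda$ genuinely spans all $k\geq 4$ coordinate directions, equivalently whenever $\A(\Lambda)$ is proper.

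First I would locate a convex subcategory of infinite type inside $\A(\Lambda)$. The box $(1,\dots,1)$ lies in every nonempty $\Lambda$, and because $\Lambda$ uses each of the $k$ directions, downward-closedness forces the box with a single $2$ in position $i$, namely $(1,\dots,2,\dots,1)$, to lie in $\Lambda$ for every $i$. Each such box carries an arrow $\varphi^{(i)}$ into $(1,\dots,1)$, so the origin receives $k$ distinct in-arrows. I would then check that the induced subquiver on these $k+1$ vertices is exactly the $k$-subspace star: the origin is a global sink (no coordinate can be lowered below $1$), its $k$ sources are pairwise joined by no path, so the subquiver is path-closed, and it contains no commutative square, so $I(\Lambda)$ imposes no relation on it. Hence this convex subcategory is the honest path algebra of the star with $k$ arms oriented toward the center.

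The representation type of that star is classical: it is $\widetilde{D_4}$ (tame concealed) for $k=4$ and wild for $k\geq 5$, hence of infinite representation type in every case $k\geq 4$. By Lemma~\ref{lem:reduct_subcat}(3), the presence of an infinite-type convex subcategory forces $\A(\Lambda)$ itself to be of infinite representation type --- which is precisely the content of Theorem~\ref{thm:high_stair_rep_type}(a), so I may simply cite that. Feeding this back through Theorem~\ref{thm:crit_multi_grad_pairs} yields infinitely many $\Lambda$-graded nilpotent tuples, as claimed.

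The argument has essentially no computational obstacle; the one point demanding care is the reading of ``$k$-dimensional partition''. One must insist that $\Lambda$ actually uses all $k$ axes, since otherwise $\A(\Lambda)$ correlates by Lemma~\ref{lem:reduct_orientation} to a lower-dimensional staircase algebra, the $k$-star at the origin degenerates to a Dynkin star, and finiteness can reoccur for fewer than four effective directions. Under the standing convention that a genuinely $k$-dimensional $\Lambda$ is proper, no such degeneration arises and the corollary is immediate from the two cited theorems.
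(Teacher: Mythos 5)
Your proposal is correct and follows essentially the same route as the paper: the corollary is stated there as an immediate consequence of Theorem~\ref{thm:crit_multi_grad_pairs} together with Theorem~\ref{thm:high_stair_rep_type}(a), and your explicit construction of the $k$-armed star at the origin (convex, relation-free, of type $\widetilde{D_4}$ for $k=4$ and wild for $k\geq 5$) is precisely the argument the paper uses to prove Theorem~\ref{thm:high_stair_rep_type}(a). Your remark that $\Lambda$ must genuinely use all $k$ directions matches the paper's implicit properness convention and introduces no divergence in method.
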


\begin{lemma}
Let $V$ be a $4$-graded staircase algebra. If $\dimv V$ contains (up to correlation) the minimal nullroot
\begin{center}
\begin{tikzpicture}
\matrix (m) [matrix of math nodes, row sep=0.2em,
column sep=0.6em, text height=0.6ex, text depth=0.1ex]
{_1 &&_1 \\
&  _2 &\\
_1 &  & _1\\ };
\path[->]
(m-1-1) edge (m-2-2)
(m-1-3) edge  (m-2-2)
(m-3-1) edge (m-2-2)
(m-3-3) edge (m-2-2);
\end{tikzpicture}
\end{center}

then there are (up to Levi-base change) infinitely many $4$-graded nilpotent tuples of $V$. If the shape of $V$ equals the shape of the nullroot, the converse imlication is true, as well, since the algebra is tame concealed.
\end{lemma}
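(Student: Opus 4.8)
The plan is to treat the two implications separately and, in both, to pass from nilpotent tuples to representations of $\A(\sh(V))$ by Lemma~\ref{lem:grad_fin_dim}: it suffices to count isomorphism classes of representations of the fixed dimension vector $\dimv V$. Write $\delta$ for the displayed minimal nullroot. The hypothesis that $\dimv V$ contains $\delta$ (up to correlation) means precisely that $\Q(\sh(V))$ has a four-star subquiver $\Delta$ --- a central vertex together with its four neighbours in the four grading directions --- which carries no commutativity relations and is therefore the hereditary path algebra of Euclidean type $\widetilde{D}_4$; by Theorem~\ref{thm:high_stair_rep_type}(b) this is tame concealed, with $\delta$ its minimal positive nullroot, and the restriction of $\dimv V$ to $\Delta$ dominates $\delta$.

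For the forward implication I would first invoke (exactly as in the proof of Lemma~\ref{lem:3_fixed_vsp_tc}, and in complete analogy with the tri-graded Lemma~\ref{lem:3_fixed_vsp_nullroot}) that the tame concealed algebra on $\Delta$ admits a one-parameter family $\{M_\lambda\}_{\lambda\in K}$ of pairwise non-isomorphic indecomposables of dimension vector $\delta$, namely the homogeneous regular simples with $\End(M_\lambda)=K=\Ext^1(M_\lambda,M_\lambda)$. Expanding each $M_\lambda$ by zeros along $\Delta\hookrightarrow\Q(\sh(V))$ --- the ``expand with zeros'' operation underlying Lemma~\ref{lem:reduct_subcat} --- produces pairwise non-isomorphic $\A(\sh(V))$-representations of the dimension vector $\hat\delta$ equal to $\delta$ on $\Delta$ and $0$ elsewhere. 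Since $\dimv V$ contains $\delta$ we have $\dimv V-\hat\delta\geq 0$, so I fix any representation $N$ of that dimension vector (for instance the semisimple one, all of whose maps vanish and hence satisfy the relations trivially) and form $M_\lambda\oplus N$. By Krull--Remak--Schmidt these are pairwise non-isomorphic of dimension vector $\dimv V$, so $\rep\A(\sh(V))(\dimv V)$ has infinitely many isomorphism classes and Lemma~\ref{lem:grad_fin_dim} delivers infinitely many tuples.

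For the converse I specialise to $\sh(V)=\Lambda_0$, the shape whose quiver is exactly $\Delta$; then $\A(\Lambda_0)=K\widetilde{D}_4$ is tame concealed and $\dimv V$ is simply a dimension vector for this one algebra. Here I apply the tame concealed dichotomy recorded in Lemma~\ref{lem:3_fixed_vsp_tc}: the unique one-parameter family forces infinitely many isomorphism classes precisely when $\dimv V$ dominates the minimal nullroot $\delta$, and otherwise there are only finitely many. Taking the contrapositive and feeding it back through Lemma~\ref{lem:grad_fin_dim} yields the asserted converse.

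The routine part is the forward direction --- the standard ``family plus fixed complement'' construction. The genuine content sits in the converse, where one must know that a fixed dimension vector of a tame concealed algebra can carry infinitely many isomorphism classes only through the homogeneous tubes, i.e. only when it dominates $\delta$; equivalently, that every indecomposable with dimension vector not $\geq\delta$ is preprojective, preinjective, or a rigid regular module lying below the $\delta$-level of an exceptional tube, of which there are only finitely many below a fixed bound. This structural input about tame concealed hereditary algebras is exactly what underlies Lemma~\ref{lem:3_fixed_vsp_tc}, and it is the step I would be careful to cite rather than reprove.
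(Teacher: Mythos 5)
Your proposal is correct and follows essentially the same route as the paper, whose proof is just the two-line remark that the first part follows from Lemma~\ref{lem:grad_fin_dim} and the second is proved as in Lemma~\ref{lem:3_fixed_vsp_tc}; you have simply made explicit what the paper leaves implicit (the $\widetilde{D}_4$ star subcategory, the expand-by-zeros family plus fixed complement for the forward direction, and the tame concealed structure theory for the converse). No discrepancy in method or substance.
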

\begin{proof}
The first part follows from Lemma \ref{lem:grad_fin_dim}; the proof of the second part is similar to the proof of Lemma \ref{lem:3_fixed_vsp_tc}.
\end{proof}

\appendix

\section{Flat tri-graded algebras}\label{app:arqs}

\subsection[Auslander-Reiten quiver of A(2,2,1,1,1,1)]{Auslander-Reiten quiver of $\A(2,\underline{2},1^4)$}\label{app:arq_221111}

\begin{center}
\includegraphics[trim=35 480 260 160, clip,width=300pt]{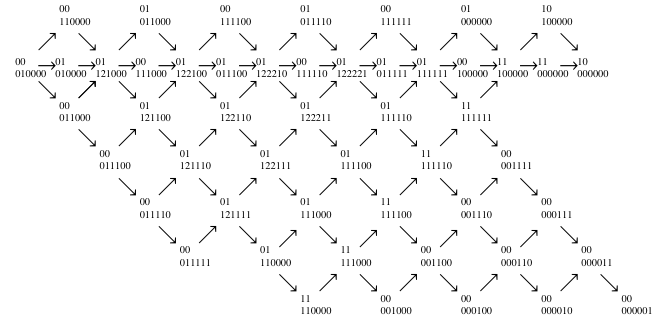}
\end{center}

\subsection[Auslander-Reiten quiver of A(2,2,2,2,1)]{Auslander-Reiten quiver of $\A(2,2,2,\underline{2},1)$}\label{app:arq_12222}
\begin{center}
\includegraphics[angle=270, trim=150 5 320 460, clip,width=300pt]{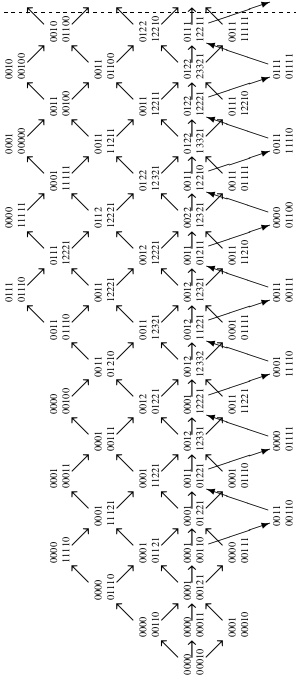}
\end{center}

\begin{center}
\includegraphics[angle=270, trim=130 310 320 160, clip,width=300pt]{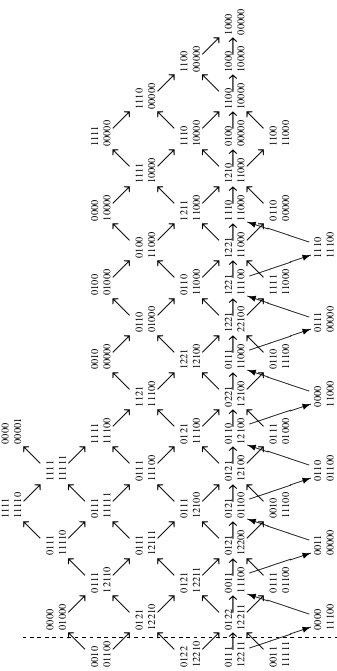}
\end{center}

\subsection[Auslander-Reiten quiver of A(3,3,1,1,1)]{Auslander-Reiten quiver of $\A(3,\underline{3},1,1,1)$}\label{app:arq_33111}
\begin{center}
\includegraphics[angle=270, trim=140 30 310 410, clip,width=300pt]{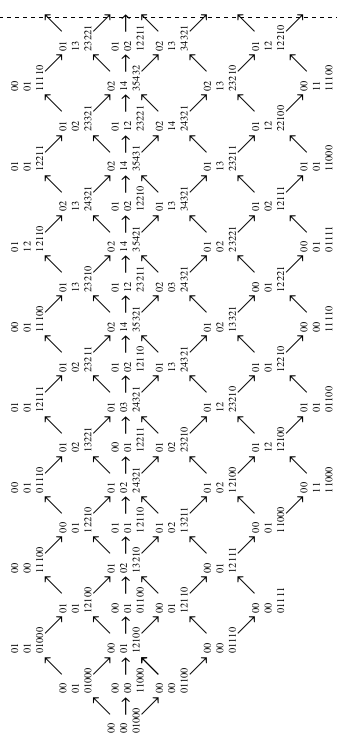}
\end{center}

\begin{center}
\includegraphics[angle=270, trim=140 350 280 90, clip,width=300pt]{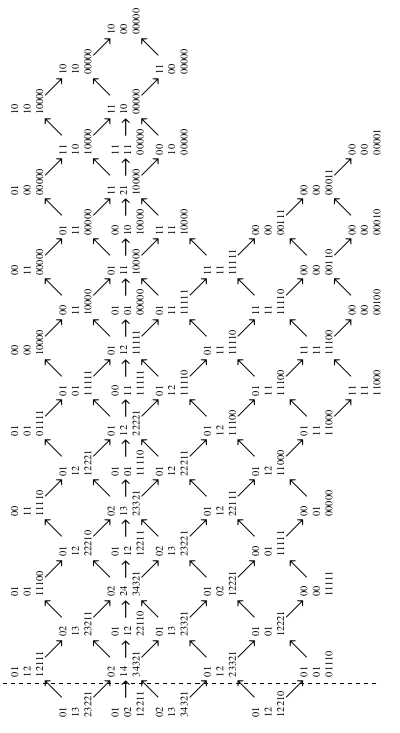}
\end{center}

\subsection{Case study (up to symmetry)}\label{app:case_study}
\begin{center}
\includegraphics[trim=130 560 124 125, clip,width=300pt]{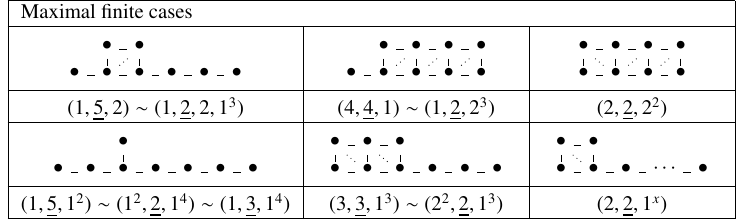}
\end{center}

\begin{center}
\includegraphics[trim=130 381 115 125, clip,width=300pt]{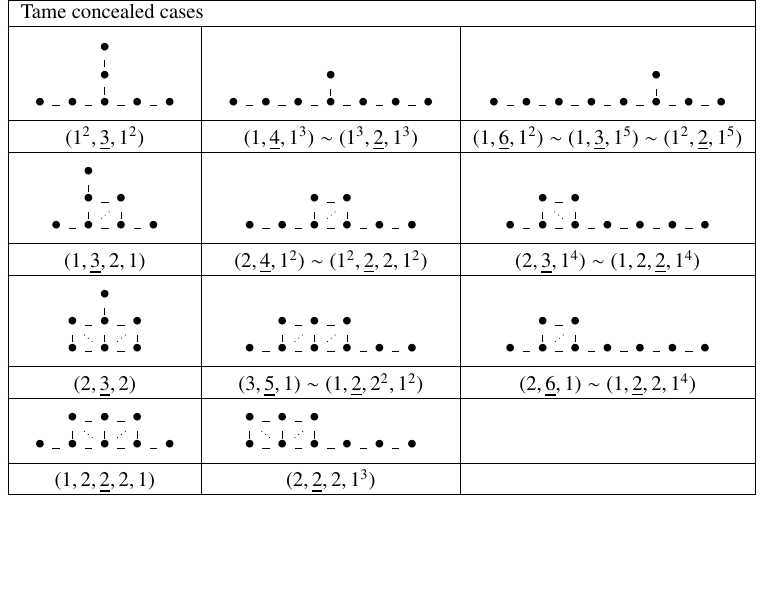}
\end{center}

\begin{center}

\includegraphics[trim=130 490 145 125, clip,width=300pt]{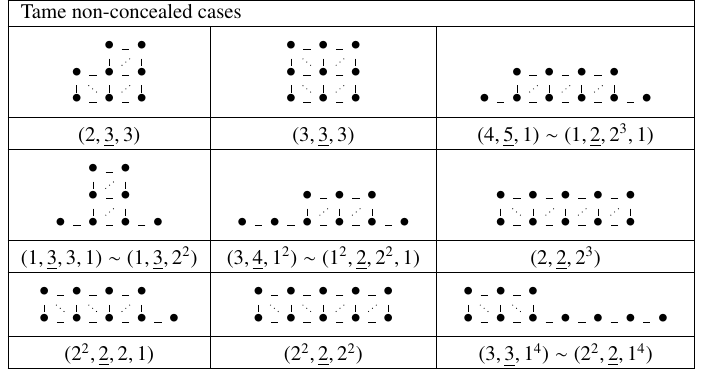}
\end{center}

\end{document}